\newcommand{\imod}[1]{\,\left(\textnormal{mod }#1\right)}
\newcommand{\bigO}{\mathcal{O}}
\newcommand{\littleo}[1]{o\left(#1\right)\xspace}
\newtheorem{thm}{Theorem}[section]
\newtheorem{theorem}[thm]{Theorem}
\newtheorem{proposition}[thm]{Proposition}
\newtheorem{lemma}[thm]{Lemma}
\newtheorem*{theorem*}{Theorem}
\newtheorem*{problem*}{Problem}
\newtheorem*{corollary*}{Corollary}
\theoremstyle{definition}
\newcommand{\R}{\mathbb{R}} 
\newcommand{\Z}{\mathbb{Z}} 
\title{Explicit Improvements to the Burgess Bound via P\'{o}lya--Vinogradov}
\author{
  Matteo Bordignon\\
  \texttt{m.bordignon@student.unsw.edu.au}\\
  University of New South Wales Canberra, School of Science\\
  \\
  Forrest J. Francis\\
  \texttt{f.francis@student.unsw.edu.au}\\
  University of New South Wales Canberra, School of Science
}
\date{\today}
\begin{document}

\maketitle

\begin{abstract}
  We make explicit a theorem of Fromm and Goldmakher \cite{Fromm}, which states that one can improve Burgess' bound for short character sums simply by improving the leading constant in the P\'{o}lya--Vinogradov inequality. Towards achieving this, we establish explicit versions of several estimates related to the mean values of real multiplicative functions and the Dickman function. 
\end{abstract}

\section{Introduction}

Given a Dirichlet character $\chi \imod{q}$, it is often the case that we need to consider the size of the corresponding \emph{character sum}, 
\begin{equation}\label{charactersum}
S_\chi(t) = \sum_{n \leq t} \chi(n).
\end{equation}
Owing to the orthogonality relation on residues modulo $q$, one only ever needs to consider the case that the character sum is \emph{short}, i.e., $t \leq q$. In this case, we have the trivial estimate,
\[\lvert S_\chi(t) \rvert \leq t. \]

There are two standard non-trivial estimates for the size of \eqref{charactersum}. First, the \emph{P\'{o}lya--Vinogradov inequality},
$S_\chi(t) \ll \sqrt{q}\log{q}$ (henceforth referred to as the ``P--V inequality''). Second, \emph{Burgess' bound}, $S_\chi(t) \ll t^{1-\frac{1}{r}}q^{\frac{r+1}{4r^2}+\epsilon}$, for $\epsilon > 0$ and an integer $r > 2$.
If the modulus $q$ is a prime, then both of these estimates can be used to show that 
\begin{equation}\label{littleoburgess}
S_\chi(t) = \littleo{t},
\end{equation}
for large enough $t$. 

One might consider the Burgess bound to be a better result, however, unless the character sum is particularly long. Specifically, P--V implies that \eqref{littleoburgess} holds for $t > q^{\frac{1}{2}+\epsilon}$, while Burgess' bound implies that \eqref{littleoburgess} holds for $t > q^{\frac{1}{4}+ \littleo{1}}$. The proof of the Burgess bound also relies on advanced results due to Weil \cite{weil19481}, while the standard proof of the P--V inequality is substantially easier. Finally, one of the best-known P--V inequalities is proved using the effective range of Burgess' bound, see \cite{Hildebrand1988}.

Conversely, when working with explicit versions of these estimates, any improvement to the leading constant in the P--V inequality will immediately yield improvements in the leading constant for Burgess' bound (see, for example, \cite{trevino2015} and \cite{Forrest}). Fromm and Goldmakher \cite{Fromm} have recently established that, in fact, improvements to the P--V inequality can be used to extract improvements to the effective range (with respect to $t$) in Burgess' bound. Precisely, they establish the following relationship.
\begin{theorem}{\textnormal{\cite{Fromm}*{Theorem A}}}\label{Theo:frommA}
Suppose the P--V inequality can be improved to $S_\chi(t) =  \littleo{\sqrt{q}{\log{q}}}$ for all even primitive quadratic $\chi \imod{q}$. Then $S_\xi(t) = \littleo{t}$ for all $t \gg_\epsilon p^{\epsilon}$ for all odd primitive quadratic $\xi \imod{p}$. 
\end{theorem}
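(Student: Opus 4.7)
The plan is to follow the pretentious framework of Granville and Soundararajan, reducing the question "when is $S_\xi(t)$ small?" to a question about which Dirichlet characters $\xi$ can \emph{pretend} to be. Roughly: if $|S_\xi(t)|$ is close to $t$, then by a Hal\'{a}sz-type mean value theorem $\xi$ must pretend to be some character $\psi$ of small conductor; since $\xi$ is odd and quadratic, a judicious choice of $\psi$ forces the twisted character $\xi\psi$ to be even and quadratic. The hypothesized improvement to P--V for even quadratic characters then bounds $S_{\xi\psi}(t)$, and this bound can be translated back to a bound on $S_\xi(t)$.

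I would begin by arguing the contrapositive: suppose $|S_\xi(t)|\ge \delta t$ at some $t\ge p^\epsilon$. An explicit Hal\'{a}sz--Granville--Soundararajan mean value theorem for real multiplicative functions produces a primitive real character $\psi$ of conductor $O_\delta(1)$ whose pretentious distance to $\xi$ up to height $t$ is bounded in terms of $\delta$ alone. Because $\xi$ is odd and $\psi$ has bounded conductor, we can arrange (by multiplying $\psi$ by the nontrivial character modulo $4$ if necessary) that $\xi\psi$ is an even primitive quadratic character of conductor at most $4p\cdot\mathrm{cond}(\psi) = O_\delta(p)$.

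Next I would apply the hypothesis: it gives $S_{\xi\psi}(t) = \littleo{\sqrt{p}\log p}$. To pass from the small twist sum back to a bound on $S_\xi$ itself, one decomposes each $n\le t$ into a smooth part supported on the primes dividing $\mathrm{cond}(\psi)$ times a coprime part, so that $\xi(n)$ and $\xi(n)\psi(n)$ differ only by a character on the smooth part. This expresses $S_\xi(t)$ as a finite linear combination of terms of the form $\xi(d)\,S_{\xi\psi}(t/d)$ together with an error governed by the count of $y$-smooth integers up to $t$. The Dickman function $\rho$ enters here through the asymptotic $\Psi(x,y)\sim x\rho(u)$ with $u=\log x/\log y$, which is what allows the smooth error to be shown to be $\littleo{t}$ already when $t\ge p^\epsilon$.

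The main obstacle is quantitative control. The qualitative pretentious mean value theorem only produces a \emph{bounded}-conductor $\psi$, whereas we need explicit constants robust enough to operate all the way down to $t=p^\epsilon$; likewise, the smooth-number error has to be handled explicitly via the Dickman function. Upgrading both inputs to explicit form is precisely the auxiliary work the paper promises. Once they are in place, the implication chain "$|S_\xi(t)|\gg t \Rightarrow \xi$ pretends to be a $\psi$ of bounded conductor $\Rightarrow S_{\xi\psi}(t)\gg t$ for the even quadratic twist $\xi\psi \Rightarrow$ contradiction with the assumed improved P--V" closes the argument and delivers $S_\xi(t)=\littleo{t}$ for $t\gg_\epsilon p^\epsilon$.
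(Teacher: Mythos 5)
There is a genuine gap in your proposal, and it is at the step where you ``apply the hypothesis.'' You suppose $\lvert S_\xi(t)\rvert \ge \delta t$ at some $t\ge p^\epsilon$, construct an even primitive quadratic twist $\xi\psi$ of conductor $O_\delta(p)$, and invoke the improved P--V to get $S_{\xi\psi}(t)=\littleo{\sqrt{p}\log p}$. But this is information-free for short sums: for $t = p^\epsilon$ with $\epsilon < 1/2$, the trivial bound $\lvert S_{\xi\psi}(t)\rvert\le t$ is already far stronger than $\littleo{\sqrt{p}\log p}$. Whatever decomposition you then use to pass from $S_{\xi\psi}$ back to $S_\xi$, the inequality you are feeding into it is vacuous when $\epsilon$ is small. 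Your argument would at best close the loop for $\epsilon > 1/2 + \littleo{1}$, which P--V already gives directly without any of this machinery; it cannot reach all $\epsilon>0$, which is the entire content of the theorem. A secondary problem is that $\psi(n)$ depends on the residue of $n$ modulo $\mathrm{cond}(\psi)$, not merely on the part of $n$ supported on the primes dividing $\mathrm{cond}(\psi)$, so the claim that $\xi(n)$ and $\xi(n)\psi(n)$ ``differ only by a character on the smooth part'' is not correct as stated, and the proposed expression of $S_\xi(t)$ as a combination of $\xi(d)\,S_{\xi\psi}(t/d)$ does not follow.

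The mechanism in the paper (following Fromm and Goldmakher) is structurally different precisely in order to avoid this trap. Rather than applying P--V at the short length $t$, one first transfers the assumed lower bound on the \emph{ordinary} mean value $\mathbf{M}_\xi$ at $x = t$ to a lower bound on the \emph{logarithmic} mean value $\mathbf{L}_\xi(x)$, via Lemma \ref{lemma:M-L} (Fromm--Goldmakher's Lemma B). The logarithmic mean value then feeds into a P\'olya--Fourier-expansion / Gauss-sum identity (their Lemma 2.1), which manufactures \emph{long} character sums $S_\chi(N)$ for infinitely many even primitive quadratic $\chi\imod{q}$ satisfying
\begin{equation*}
\lvert S_\chi(N)\rvert \ge \left(\tfrac{\sqrt{l}\,\delta(c)\,\epsilon}{2\pi\varphi(l)}+\littleo{1}\right)\sqrt{q}\log q,
\end{equation*}
and \emph{these} are the sums that contradict the improved P--V hypothesis. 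So the twist by a small odd quadratic character (a prime $l\equiv 3\pmod 4$) is indeed present, but the contradiction is obtained at a length $N$ comparable to $q$, not at $t=p^\epsilon$. Your intuition about where the Dickman function enters is also off: in the paper it appears inside Hildebrand's lower bound for $\mathbf{M}_{1*f}$ (Theorem \ref{theo:lower} and Lemma \ref{lemma:dick}), which is what proves the $\mathbf{M}\Rightarrow\mathbf{L}$ transfer, not as the error term of a smooth-number sieve in a twisting decomposition.
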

Based on a suggestion Fromm and Goldmakher made in their paper, we will prove the following explicit version of Theorem \ref{Theo:frommA}. The interested reader may also consider the work of Mangerel \cite{Mangerel}, for a different approach to the relationship between P--V and Burgess. 
\begin{theorem}
\label{Theo:exp}
Suppose the P--V inequality can be improved to \[S_{\chi}(t)\leq (c_1+\littleo{1})\sqrt{q}\log q,\] for all even primitive quadratic $\chi \imod{q}$. Then for all odd primitive quadratic characters $\xi \imod{p}$ we have $S_{\xi}(t)< c t$ for $t>p^{\epsilon (c_1, c)}$, with $\epsilon (c_1,c)=4 \pi  \frac{c_1}{\delta(c)^{3/2}}+o_t(1)$ and $\delta(c)$ as in Lemma \ref{lemma:M-L}, such that $\delta(c) \le 2/7$.  
\end{theorem}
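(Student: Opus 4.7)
Following Fromm and Goldmakher \cite{Fromm}, we aim to bound $S_\xi(t)$ for an odd primitive quadratic character $\xi \imod{p}$ by splitting it according to the size of the largest prime factor $P(n)$. Introduce a smoothness parameter $y = t^{1/u}$ and write
\[
S_\xi(t) = \sum_{\substack{n\leq t \\ P(n)\leq y}} \xi(n) \;+\; \sum_{\substack{n\leq t \\ P(n)>y}} \xi(n).
\]
The parameter $u$ will be fixed at the very end so as to balance the two pieces; the smooth piece will be attacked by mean value estimates for multiplicative functions on smooth integers, and the rough piece by the hypothetical improved P--V inequality, after passing from $\xi$ to an associated even character.

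For the smooth piece, the density of $y$-smooth integers below $t$ is asymptotically $\rho(u)\,t$, where $\rho$ is the Dickman function. Using an explicit Hal\'asz--Tenenbaum style mean value inequality together with the estimates on real multiplicative functions advertised in the abstract, the smooth partial sum is bounded by $\delta(c)\, t + o(t)$ once $u$ is large enough to make $\rho(u)$ comparable to $\delta(c)$. This is the content of Lemma \ref{lemma:M-L}, and is exactly where the auxiliary parameter $\delta(c)\leq 2/7$ enters: the stated range is precisely the regime in which the lemma gives a usefully small mean value.

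For the rough piece, we write $n = qm$ with $q = P(n) > y$ and reverse the order of summation to obtain an expression of the form $\sum_{y<q\leq t}\xi(q)\,\Psi(t/q,q)$. The key observation is that $\chi := \xi\cdot \chi_4$, with $\chi_4$ the nontrivial character modulo $4$, is \emph{even} and primitive modulo $4p$, so the hypothesis yields $|S_\chi(x)| \leq (c_1+o(1))\sqrt{4p}\,\log(4p)$ uniformly in $x$. Abel summation converts this into a bound on sums of $\xi$ over primes, and folding in the density of smooth numbers produces a contribution of order $c_1\sqrt{p}\,\log p$ times a function of $u$ that decreases in $u$.

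Combining the two bounds, imposing that each is below $c\,t/2$, and solving for $t$ yields the threshold $t \geq p^{\epsilon(c_1,c)}$ with
\[
\epsilon(c_1,c) = \frac{4\pi\, c_1}{\delta(c)^{3/2}} + o_t(1).
\]
The factor $4\pi$ is expected to emerge from combining the $\log(4p)$ in the P--V bound with a logarithmic weight produced by Abel summation, while $\delta(c)^{3/2}$ arises from the $\delta(c)$ in the smooth bound together with an additional $\delta(c)^{1/2}$ coming from the optimization in $u$ (recovered by inverting the approximate relation $\rho(u)\asymp\delta(c)$ in terms of $u$). The main technical obstacle is to establish the explicit mean value estimate on smooth integers with sufficiently sharp error terms that this particular exponent is cleanly isolated; in particular, keeping the constant $4\pi/\delta(c)^{3/2}$ rather than a looser expression will require careful bookkeeping of every explicit constant in the Dickman and mean value estimates throughout.
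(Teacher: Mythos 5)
Your proposal takes a genuinely different route from the paper, and in its present form it has a gap: the argument as sketched cannot recover the specific constant $4\pi/\delta(c)^{3/2}$, and the claimed origin of that constant does not match where it actually comes from. The paper argues by contradiction, not by a direct smooth/rough decomposition of $S_\xi(t)$. It assumes $|\mathbf{M}_\xi(x)| \geq c$ for some $x > p^{\epsilon}$, invokes Lemma \ref{lemma:M-L} to pass from $\mathbf{M}_\xi$ to $\mathbf{L}_\xi(x) \geq \delta(c)$, and then feeds this into Lemma 2.1 of Fromm--Goldmakher, which is a ``P--V lower bound'' machine: a large $\mathbf{L}_\xi$ forces infinitely many even primitive characters $\chi$ (of modulus roughly $lp$, where $l$ is an auxiliary prime $\equiv 3 \pmod 4$) to satisfy
\[
|S_\chi(N)| \geq \left(\frac{\sqrt{l}\,\delta(c)\,\epsilon}{2\pi\varphi(l)} + o(1)\right)\sqrt{q}\log q,
\]
which contradicts the hypothesis once $\epsilon > 2\pi c_1 \varphi(l)/(\sqrt{l}\,\delta(c))$. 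The $4\pi$ then comes from $2\pi$ (the Fourier/Gauss-sum constant in that lower bound) times a factor $2$ from $\sqrt{l} \leq 2/\sqrt{\delta(c)}$, and the $\delta(c)^{3/2}$ comes from $\delta(c)$ (the $\mathbf{L}_\xi$ lower bound) times $\delta(c)^{1/2}$ (from the size of $l$, controlled by an explicit Bertrand postulate in arithmetic progressions after using $\varphi(l)\leq l$). None of this is a Dickman-function optimization, and the $\log(4p)$/Abel-summation heuristic you give for the $4\pi$ is not the source.

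More concretely: your smooth/rough split never produces a \emph{lower} bound on any $|S_\chi|$, yet the hypothesis is an \emph{upper} bound on $S_\chi$, and the only way the paper (and Fromm--Goldmakher) make the hypothesis bite is by deriving the contrapositive --- a lower bound on some even $S_\chi$ from a putatively large $S_\xi$. Simply multiplying $\xi$ by $\chi_4$ and applying the hypothesis to the rough piece does not obviously give this; the rough sum $\sum_{y<q\leq t}\xi(q)\Psi(t/q,q)$ involves $\xi$ at primes, and there is no direct route from the P--V hypothesis (a bound on $S_{\xi\chi_4}(N)$ for all $N$) to a bound on this prime sum at the required level of saving without the $L$-function/Fourier machinery of Lemma 2.1. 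You also misattribute the role of Lemma \ref{lemma:M-L}: it is the $\mathbf{M}\Rightarrow\mathbf{L}$ bridge, not a bound on a smooth-number partial sum, so the statement ``the smooth partial sum is bounded by $\delta(c)\,t + o(t)$ ... this is the content of Lemma \ref{lemma:M-L}'' is not correct. To repair the proposal, you should replace the direct decomposition with the contradiction scheme, invoke Fromm--Goldmakher's Lemma 2.1 explicitly, and track $\varphi(l)/\sqrt{l}$ and the range of $l$ to obtain the factor $4\pi/\delta(c)^{3/2}$ cleanly.
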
 
The above result is particularly interesting, as it the first to shows that a Burgess-like result depends in a meaningful way on the leading constant in the P--V inequality. In Table \ref{samplevalues}, we compare $\epsilon(c_1,c)$ for various $c$ using the best known P--V constant and several powers of $10$.

\begin{table}[ht]
\centering
\caption{Sample values for $\epsilon(c_1,c)$.}
\label{samplevalues}
\begin{tabular}{r|r|r|r|r|r|}
\cline{2-6}
                                                              & \multicolumn{1}{c|}{\cellcolor[HTML]{C0C0C0}$c = 0.99$}                      & \multicolumn{1}{c|}{\cellcolor[HTML]{C0C0C0}0.5}                             & \multicolumn{1}{c|}{\cellcolor[HTML]{C0C0C0}0.25}                & \multicolumn{1}{c|}{\cellcolor[HTML]{C0C0C0}0.05}                & \multicolumn{1}{c|}{\cellcolor[HTML]{C0C0C0}0.025}               \\ \hline
\rowcolor[HTML]{EFEFEF} 
\multicolumn{1}{|r|}{\cellcolor[HTML]{C0C0C0}$c_1$}           & \multicolumn{1}{c|}{\cellcolor[HTML]{EFEFEF}$\delta(c) = 1.56\cdot10^{-10}$} & \multicolumn{1}{c|}{\cellcolor[HTML]{EFEFEF}$5.51\cdot10^{-11}$} & \multicolumn{1}{c|}{\cellcolor[HTML]{EFEFEF}$1.92\cdot10^{-11}$} & \multicolumn{1}{c|}{\cellcolor[HTML]{EFEFEF}$1.65\cdot10^{-12}$} & \multicolumn{1}{c|}{\cellcolor[HTML]{EFEFEF}$5.78\cdot10^{-13}$} \\ \hline
\multicolumn{1}{|r|}{\cellcolor[HTML]{EFEFEF}1}               & $9.15\cdot10^{15}$                                                           & $4.35\cdot10^{16}$                                                           & $2.12\cdot10^{17}$                                               & $8.32\cdot10^{18}$                                               & $4.05\cdot10^{19}$                                               \\ \hline
\multicolumn{1}{|r|}{\cellcolor[HTML]{EFEFEF}$(2\pi^2)^{-1}$} & $4.64\cdot10^{14}$                                                           & $2.21\cdot10^{15}$                                                           & $1.08\cdot10^{16}$                                               & $4.22\cdot10^{17}$                                               & $2.05\cdot10^{18}$                                               \\ \hline
\multicolumn{1}{|r|}{\cellcolor[HTML]{EFEFEF}$10^{-5}$}       & $9.15\cdot10^{10}$                                                           & $4.35\cdot10^{11}$                                                           & $2.12\cdot10^{12}$                                               & $8.32\cdot10^{13}$                                               & $4.05\cdot10^{14}$                                               \\ \hline
\multicolumn{1}{|r|}{\cellcolor[HTML]{EFEFEF}$10^{-10}$}      & $9.15\cdot10^{5}$                                                            & $4.35\cdot10^{6}$                                                            & $2.12\cdot10^{7}$                                                & $8.32\cdot10^{8}$                                                & $4.05\cdot10^{9}$                                                \\ \hline
\multicolumn{1}{|r|}{\cellcolor[HTML]{EFEFEF}$10^{-15}$}      & $9.15$                                                                       & $43.5$                                                                       & $212$                                                            & $8320$                                                           & $4.05\cdot10^{4}$                                                   \\ \hline
\multicolumn{1}{|r|}{\cellcolor[HTML]{EFEFEF}$10^{-20}$}      & $8.45\cdot10^{-14}$                                                          & $4.35\cdot10^{-4}$                                                           & $2.12\cdot10^{-3}$                                               & $8.32\cdot10^{-2}$                                               & $0.405$                                                          \\ \hline
\end{tabular}
\end{table}

From Table \ref{samplevalues}, one sees that $\epsilon(c_1,c)$ roughly decays in magnitude as $c_1$ does. However, even to obtain an improvement over the trivial bound would require significant improvements over the best available choices of $c_1$. One should expect this behaviour, since one also expects to be able to take $c_1$ tending to 0. Additionally, since the best $c_1$ in the P--V inequality is obtained via Burgess' bound, one does not expect to have $\epsilon(c_1,c) < 0.25$ for all $c$ while $c_1$ is fixed. While there is room for improvement in $\epsilon (c_1,c)$, we believe that our result has significance as the first of its kind. This is also part of the reason, together with the heavy analytic machinery employed, why $\epsilon (c_1,c)$ is not yet optimal. We hope this result will increase the interest in the explicit correlation between P--V and Burgess' bound. 

As an aside, note that in Theorem \ref{Theo:exp}, we have still included some $\littleo{1}$ terms. This is because many of the best known P--V results appear in this form. This choice also makes the exposition more concise. Further attempts in line with this article, in particular those using completely explicit P--V results like \cite{frolenkov2013} or \cite{Bordignon}, should be able to make the result completely explicit.

In order to obtain Theorem \ref{Theo:exp}, we must establish some notation. Let
\begin{equation*}
    \mathbf{M}_f(x):= \frac{1}{x}\sum_{n \leq x}f(n) \quad \text{and}\quad \mathbf{L}_f(x):=\frac{1}{\log x}\sum_{n \leq x}\frac{f(n)}{n}.
\end{equation*}
The result that allowed Fromm and Goldmakher to obtain Lemma A in \cite{Fromm} is a correlation between the two functions defined above. This correlation, Lemma B in \cite{Fromm}, assures us that if $\mathbf{M}_f(x)$ is bounded away from zero, then $\mathbf{L}_f(x)$ will be as well (for certain $f$). The proof of Theorem \ref{Theo:exp} relies on establishing an explicit version of Lemma B in \cite{Fromm}.
\begin{lemma}
\label{lemma:M-L}
Given $c>0$ and $x_0=x_0(c)\ge 1$ such that 
\begin{equation*}
|\mathbf{M}_f(x)|\ge c \Rightarrow \mathbf{L}_f(x)\ge \delta(c),
\end{equation*}
with
\begin{equation*}
    \delta(c):= 0.2\exp\left(-\frac{1}{K} \log \left(\frac{9.75\cdot 10^{5}}{c}\right)\left(1.42\left(\frac{9.75\cdot 10^{5}}{c}\right)^{\frac{1}{2K}}+1/2\right)\right)+o_{x}(1),
\end{equation*}
for all completely multiplicative functions $f:\mathbb{Z}\rightarrow [-1,1]$, $x> x_0$, $K\approx 0.3286$.
\end{lemma}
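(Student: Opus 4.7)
The plan is to follow Fromm and Goldmakher's proof of their Lemma B in \cite{Fromm} while tracking every numerical constant. The heart of the argument is a two-step bootstrap: first bound the pretentious distance $\mathbb{D}(f,1;x)^2 := \sum_{p \le x}(1-f(p))/p$ from above using the hypothesis $|\mathbf{M}_f(x)| \ge c$, then propagate this ceiling into a lower bound for $\mathbf{L}_f(x)$. The first step uses an explicit real-valued Hal\'asz-type inequality, of the shape
\[
|\mathbf{M}_f(x)| \;\le\; C_0\exp\!\bigl(-K\,\mathbb{D}(f,1;x)^2\bigr)+\littleo{1},
\]
with $K \approx 0.3286$ and $C_0$ an absolute constant; the hypothesis then immediately yields $\mathbb{D}(f,1;x)^2 \le \tfrac{1}{K}\log(C_0/c) + \littleo{1}$. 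The numerical value $9.75\cdot10^5$ appearing in the statement is exactly (or is bounded by) this constant $C_0$, assembled from the explicit constants in the ingredients below.

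For the second step I would use the partial-summation identity $\mathbf{L}_f(x)\log x = \mathbf{M}_f(x)+\int_1^x \mathbf{M}_f(y)\,dy/y$, together with an explicit Granville--Soundararajan Lipschitz estimate for $\mathbf{M}_f$. The Lipschitz bound shows that on a suitable window $[x/T,x]$ the function $\mathbf{M}_f(y)$ does not change sign and stays within a controlled ratio of $\mathbf{M}_f(x)$; the natural choice is $T=(C_0/c)^{1/(2K)}$, which is what gives rise to the exponent $1/(2K)$ in $\delta(c)$. The window contributes a factor of order $\exp\!\bigl(-\tfrac{1}{2K}\log(C_0/c)\bigr)$, accounting for the $1/2$ in the second parenthesis of $\delta(c)$. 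The complementary tail of the integral (over smaller $y$) is handled using the explicit Dickman-function bounds developed earlier in the paper, since the natural weight after splitting according to the largest prime factor is $\rho$; it is here that the constant $1.42$ enters, via explicit lower bounds for $\rho(u)$ and $\int_0^1 \rho(u)\,du$.

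Assembling these pieces reproduces the precise expression for $\delta(c)$: the $\littleo{1}$ absorbs the errors coming from the explicit Mertens, Hal\'asz, and Dickman estimates, and the restriction $\delta(c) \le 2/7$ is a mild normalisation needed for the subsequent application in Theorem~\ref{Theo:exp}. The principal obstacle is the second step. One has to fuse an explicit Hal\'asz-type inversion, an explicit Lipschitz estimate for $\mathbf{M}_f$, and the explicit Dickman bounds simultaneously, and any numerical slack in any one of these ingredients is compounded by the subsequent exponentiation. Controlling this compounding, without letting the constants explode, is precisely what forces the somewhat unusual ``double-exponential'' form of $\delta(c)$ and explains why $\delta(c)$ is so much smaller than $c$ even for moderate values of $c$.
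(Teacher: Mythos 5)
Your first step — inverting the explicit Hal\'asz-type upper bound of Theorem~\ref{thm:forresttheorem} to convert $|\mathbf{M}_f(x)|\ge c$ into an upper bound for $u=\sum_{p\le x}(1-f(p))/p$, with $9.75\cdot10^5$ playing the role of your $C_0$ — matches the paper exactly. But your second step takes a different route, and it has a gap.

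The paper does not use the partial-summation identity $\mathbf{L}_f(x)\log x = \mathbf{M}_f(x)+\int_1^x\mathbf{M}_f(y)\,dy/y$ together with a Granville--Soundararajan Lipschitz estimate. Instead it observes that
\[
\mathbf{L}_f(x)=\frac{1}{x\log x}\sum_{n\le x}(1*f)(n)+\littleo{1},
\]
and then lower-bounds $\tfrac1x\sum_{n\le x}(1*f)(n)$ via Theorem~\ref{theo:lower}, which is an explicit version of Hildebrand's lower bound for mean values of nonnegative multiplicative functions. This is the step where the Dickman function and the constant $1.42$ enter: Lemma~\ref{lemma:dick} gives $\rho(x)\ge x^{-1.42x}$, and this feeds into Hildebrand's theorem to produce the factor $e^{-u(1.42e^{u/2}+1/2)}$ directly. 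Crucially, the argument works because $(1*f)(n)\ge 0$ for every $n$ when $f$ is real, completely multiplicative, and bounded by $1$ — so the quantity being bounded below is intrinsically nonnegative.

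Your Lipschitz route runs into a sign problem that the paper's route avoids. The hypothesis $|\mathbf{M}_f(x)|\ge c$ does not control the sign of $\mathbf{M}_f(x)$; if $\mathbf{M}_f(x)\le -c$, then both $\mathbf{M}_f(x)$ and the window integral $\int_{x/T}^x\mathbf{M}_f(y)\,dy/y$ contribute \emph{negatively}, and nothing in your decomposition produces the positive lower bound $\mathbf{L}_f(x)\ge\delta(c)>0$ that the lemma asserts. The nonnegativity of $(1*f)$ is exactly the structural input you lose by staying on the $\mathbf{M}_f$ side of the identity. Relatedly, it is not clear that your route would reproduce the precise factor $1.42e^{u/2}$: you attribute $1.42$ to ``explicit lower bounds for $\rho(u)$ and $\int_0^1\rho(u)\,du$'' appearing in a tail estimate, but $\int_0^1\rho=1$ and there is no tail split in the paper's argument; the $1.42$ arises from the pointwise Dickman bound inside Hildebrand's theorem. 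To repair your sketch you would essentially need to pass to $(1*f)$ anyway, at which point you would be reproducing the paper's proof.
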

This result allows us to prove Theorem \ref{Theo:exp}.
\begin{proof}[Proof of Theorem \ref{Theo:exp}]
Here we follow the proof of Theorem A \cite{Fromm}. Using Lemma 2.1 \cite{Fromm} and assuming $|\mathbf{M}_{\xi}(x)|\ge c$, we obtain infinitely many characters $\chi$ such that
\begin{equation*}
|S_{\chi}(N)|\ge \left(\frac{\sqrt{l}\delta(c)\epsilon}{2\pi\varphi(l)}+\littleo{1}\right)\sqrt{q}\log q,
\end{equation*}
with $l$ the least prime larger than $\frac{2}{\delta(c)}$ which satisfies $l \equiv 3 \pmod{4}$. We therefore have a contradiction if $\frac{\sqrt{l}\delta(c)\epsilon}{2\pi \varphi(l)} > c_1$, i.e. when $\epsilon > 2 \pi c_1 \frac{\varphi(l)}{\sqrt{l}\delta(c)}$. We can further simplify this by observing that we trivially have $\varphi(l)\le l$, that results optimal for large $l$. Using the version of Bertrand's postulate for primes in arithmetic progressions in \cite{Breusch}, with the assumption $\frac{2}{\delta(c)} \ge 7$, we have that $l\leq \frac{4}{\delta(c)}$. Note that assuming a smaller upper bound for $\delta(c)$, together with Corollary 6 in \cite{Bennett}, is possible to reduce the constant $4$ to $2+o(1)$, we decided not to do so to keep the result as concise as possible. Thus, we obtain
\begin{equation*}
\epsilon > 4 \pi  \frac{c_1}{\delta(c)^{\frac{3}{2}}}.
\end{equation*} 
\end{proof}
The proof of Lemma \ref{lemma:M-L} will require two results, which will make up the bulk of this article.
The easier of these is the following explicit version of Theorem 2 in \cite{Hildebrand} applied to $(1*f)(n)$ (another non-explicit version of this result can be found in \cite{Granville2007.1}). First, for a given multiplicative function $f$, let us define
\begin{equation*}
u:=\sum_{p\leq x}\frac{1-f(p)}{p}.
\end{equation*}
\begin{theorem}
\label{theo:lower}
Let $f(x)$ be a completely multiplicative function as defined in (1.1) of \cite{Hildebrand}. Then, we have
\begin{equation*}
\frac{1}{x}\sum_{n\leq x} (1*f)(n) \ge \left(0.2+\littleo{1}\right)\log x~e^{-u\left(1.42e^{\frac{u}{2}}+\frac{1}{2}\right)}+\littleo{1}.
\end{equation*}
\end{theorem}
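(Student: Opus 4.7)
The plan is to follow Hildebrand's proof of Theorem~2 in \cite{Hildebrand}, applied to the multiplicative function $g(n):=(1*f)(n)$, and to replace every implicit constant by an effective one. Two preliminary observations make this reduction natural. First, for completely multiplicative $f\colon\Z\to[-1,1]$ the function $g$ is multiplicative and \emph{nonnegative}, since $g(p^k)=(1-f(p)^{k+1})/(1-f(p))\ge 0$, so lower-bounding $\sum_{n\le x}g(n)$ is a well-posed problem. Second, Dirichlet's hyperbola identity yields
\begin{equation*}
\frac{1}{x}\sum_{n\le x}(1*f)(n)=\sum_{d\le x}\frac{f(d)}{d}+\littleo{1},
\end{equation*}
so a lower bound of the claimed shape on $\sum_{d\le x}f(d)/d$ would suffice.

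Next, I would partition this sum by the largest prime factor $P^{+}(d)$ at a threshold $y=x^{1/v}$, with $v$ to be optimised at the end. The contribution from $y$-smooth $d$ is the source of the factor involving $e^{u/2}$ and of the constant $1.42$: it is bounded from below using the explicit Dickman function estimates which the authors establish earlier in the paper, followed by an Abel summation against $\log x$, with the constant $1.42$ being precisely what one extracts from the best available explicit upper bound for $\rho$ near zero. The complementary contribution, from $d$ with a large prime factor, factorises as $f(p)/p$ times a shorter partial sum and is handled by explicit Mertens-type estimates; this supplies the remaining exponential dependence on $u$ (including the additive $+\tfrac{1}{2}$ inside the exponent) together with the leading constant $0.2$, which is essentially what survives of $e^{-\gamma}$ once every explicit error term in Mertens' third theorem is retained honestly.

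The principal obstacle will be explicit bookkeeping rather than any conceptual novelty. Hildebrand's original argument invokes Mertens' theorems, Euler product manipulations, and bounds on the Dickman function, each carrying an unspecified implied constant; making all of them quantitatively effective and, crucially, uniform in $u$ is delicate, and is precisely why the article devotes a separate section to explicit Dickman function estimates. A secondary obstacle is the tuning of the truncation parameter $v$: it must be chosen so that the accumulated explicit errors do not swamp the leading term and destroy the constant $0.2$. This constraint is also what forces the slightly conservative exponent $1.42\,e^{u/2}+\tfrac{1}{2}$, rather than a sharper alternative that a non-explicit treatment would permit.
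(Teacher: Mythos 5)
Your plan diverges from the paper's proof in a structurally significant way, and it has concrete gaps. The paper does \emph{not} re-derive Hildebrand's argument: it applies Theorem~2 of \cite{Hildebrand} as a stated result, choosing the parameters $K=2$, $K_2=1.1$, $z=2$ so that the conclusion becomes effective, and then only two further ingredients are needed—the explicit Dickman lower bound $\rho(v)\ge v^{-1.42v}$ of Lemma~\ref{lemma:dick}, and the explicit Mertens estimates of \cite{rosser1962} to lower-bound the Euler factor $\prod_{p\le x}\left(1-\tfrac{1}{p}\right)\left(1+\tfrac{(1*f)(p)}{p}+\cdots\right)=\prod_{p\le x}\tfrac{1-1/p}{1-(1*f)(p)/p}$ by $e^{-u}\exp\left(\sum_{p\le x}\tfrac{1}{p}\right)\exp(M-\gamma)\ge e^{-u-0.32}\exp\left(\sum_{p\le x}\tfrac{1}{p}\right)$. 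Your proposal to reprove Hildebrand's theorem from scratch via the hyperbola method and a smooth/rough dichotomy at $y=x^{1/v}$ is a far longer route and does not match what the paper does; moreover, your hyperbola identity $\frac{1}{x}\sum_{n\le x}(1*f)(n)=\sum_{d\le x}\tfrac{f(d)}{d}+\littleo{1}$ has error $O(1)$, not $\littleo{1}$, and passing to $\sum_{d\le x}f(d)/d$ discards precisely the nonnegativity of $1*f$ that you yourself flagged as the reason a lower bound is tractable.

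The more serious omission is the algebraic step that produces the shape of the exponent. The quantity controlled naturally in Hildebrand's theorem involves $\sum_{p\le x}\max\bigl(0,1-(1*f)(p)\bigr)/p$, and the paper converts this to $u=\sum_{p\le x}(1-f(p))/p$ via the elementary inequality $\max\bigl(0,1-(1*f)(p)\bigr)\le\tfrac{1-f(p)}{2}$. This single inequality is the source of both the $e^{u/2}$ (after Lemma~\ref{lemma:dick} is applied inside the Dickman argument of $\rho$) and, together with the $e^{-u}$ from the Euler-product estimate above, the $+\tfrac{1}{2}$ in the exponent; neither comes from ``explicit Mertens-type estimates for the rough part'' as you assert. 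You also describe $1.42$ as coming from ``the best available explicit upper bound for $\rho$ near zero,'' but it is the exponent in the \emph{lower} bound $\rho(v)\ge v^{-1.42v}$. Without the $\max(0,\cdot)\le\tfrac{1-f(p)}{2}$ bridge the argument never connects to $u$ at all, so the target inequality would not be reached by your decomposition.
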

The second result, which is the harder to prove, is an explicit version of Theorem III.4.14 in Hall and Tenenbaum \cite{tenenbaum2015}. In our current application, we focus on functions $g(n)$ which are quadratic Dirichlet characters, but there are variants of this theorem which cover a much larger class of functions (for example, see the main theorem of \cite{hall1991}). 

\begin{theorem}\label{thm:forresttheorem}
Let $K$ be the unique solution to 
\[\frac{1}{2\pi} \int_0^{2\pi} \! \lvert \cos(t) - K \rvert  \, dt  = 1 - K.\]
Note that $K \approx 0.3286$. If $f$ is a real, completely multiplicative function, we have, uniformly for $x \geq 1$,
\begin{equation}\label{tenenbaumexplicit}
|\mathbf{M}_f(x)| \leq (9.75\cdot 10^{5}+\littleo{1})\exp\left\{-K\sum_{p\leq x}\frac{1-f(p)}{p}\right\}+\littleo{1}.
\end{equation} 
 
\end{theorem}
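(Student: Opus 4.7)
The plan is to follow the non-explicit proof of Theorem III.4.14 in Hall--Tenenbaum, turning each step into a quantitative estimate while carefully tracking constants. The central object is the pretentious distance
\[
\mathbb{D}(f,n^{it};x)^2 := \sum_{p \leq x} \frac{1-f(p)\cos(t\log p)}{p},
\]
and the proof falls naturally into three stages: an explicit Hal\'asz-type mean-value bound in terms of $\min_t \mathbb{D}(f,n^{it};x)^2$, a Fourier-analytic comparison that converts this minimum into the "honest" sum $\sum_{p\leq x}(1-f(p))/p$ at the cost of a factor $K$, and an explicit equidistribution estimate controlling the discrepancy.

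The first step is to invoke an explicit form of the Hal\'asz--Montgomery--Tenenbaum mean value theorem: for a completely multiplicative $f:\Z \to [-1,1]$ there is an explicit constant $A$ such that
\[
|\mathbf{M}_f(x)| \;\leq\; A\,\exp\!\Bigl(-\inf_{|t|\leq T(x)} \mathbb{D}(f,n^{it};x)^2\Bigr) + \littleo{1},
\]
with a suitable $T(x)$ (e.g.\ a polynomial in $\log x$). Such a bound with a computable $A$ can be extracted by tracking constants through the Perron-style contour argument in Hal\'asz's proof, or by adapting the Granville--Soundararajan pretentious approach; the value $9.75\cdot 10^{5}$ in the statement will absorb $A$ together with the errors introduced in the next step.

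The second step extracts the factor $K$. For every real $t$ and prime $p$ we write the algebraic identity
\[
1 - f(p)\cos(t\log p) \;=\; K\bigl(1-f(p)\bigr) + (1-K) - f(p)\bigl(\cos(t\log p)-K\bigr),
\]
and bound $|f(p)(\cos(t\log p)-K)| \leq |\cos(t\log p)-K|$ using $|f(p)|\leq 1$. Dividing by $p$ and summing over $p\leq x$ gives
\[
\mathbb{D}(f,n^{it};x)^2 \;\geq\; K\sum_{p\leq x}\frac{1-f(p)}{p} \;+\; \sum_{p\leq x}\frac{(1-K)-|\cos(t\log p)-K|}{p}.
\]
The defining equation $\frac{1}{2\pi}\int_0^{2\pi}|\cos s - K|\,ds = 1-K$ is exactly what makes the function $s\mapsto |\cos s - K|-(1-K)$ have mean zero over a period, so one expects the second sum to be $\littleo{\log\log x}$ uniformly in $t$. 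To quantify this, expand $|\cos s - K|$ in its Fourier series (whose coefficients are explicit and decay in a controllable way), then appeal to an explicit bound on the exponential sum over primes $\sum_{p\leq x} e^{it\log p}/p$, uniform for $|t|\leq T(x)$, via Vinogradov/Vaughan/Montgomery type estimates. The resulting quantitative discrepancy, exponentiated, is what forces the constant in front to be as large as $9.75\cdot 10^{5}$.

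Combining Steps 1 and 2 and taking the infimum over $|t|\leq T(x)$ yields
\[
|\mathbf{M}_f(x)| \;\leq\; (9.75\cdot 10^{5}+\littleo{1})\exp\!\Bigl(-K\sum_{p\leq x}\frac{1-f(p)}{p}\Bigr) + \littleo{1},
\]
which is the theorem. The main obstacle will be the second step: the explicit Hal\'asz input is essentially off the shelf, but making the equidistribution estimate uniform in a useful range of $t$ with a small enough constant is delicate. One must balance the Fourier cutoff for $|\cos s - K|$ against the explicit savings available in prime exponential sums, and any slack in either estimate is magnified after exponentiation. The concrete numerical constant $9.75\cdot 10^{5}$ is precisely the bookkeeping cost of carrying all of this through without further optimization.
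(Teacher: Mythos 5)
Your plan correctly identifies the central algebraic trick: the identity $1 - f(p)\cos(t\log p) = K\bigl(1-f(p)\bigr) + (1-K) - f(p)\bigl(\cos(t\log p)-K\bigr)$, combined with $\lvert f(p) \rvert \le 1$, is exactly what appears in the paper (cf.\ Lemma~\ref{lambdalemma}). But Step~1 has a structural gap that the paper avoids by deliberate design. You invoke a Hal\'asz bound of the form $\lvert \mathbf{M}_f(x)\rvert \leq A\exp\bigl(-\inf_{\lvert t\rvert\leq T(x)} \mathbb{D}(f,n^{it};x)^2\bigr)+\littleo{1}$ and then lower-bound the pretentious distance via Step~2. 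The resulting discrepancy term
\[
D(t,x) := \sum_{p\leq x}\frac{(1-K)-\lvert \cos(t\log p)-K\rvert}{p}
\]
has mean-zero integrand, but the explicit equidistribution estimate one obtains is $D(t,x)=O\bigl(\log\log(\lvert t\rvert+3)\bigr)$ \emph{uniformly}, not $O(1)$. Taking the infimum over $\lvert t\rvert\leq T(x)$ with $T(x)$ polynomial in $\log x$, the worst-case loss is $O(\log\log\log x)$; after exponentiation this is $(\log\log x)^{O(1)}$, which is unbounded and destroys the uniform constant in \eqref{tenenbaumexplicit}. The triangle-inequality trick for real $f$ does not rescue you: it gives $\mathbb{D}(f,n^{it};x)^2 \gtrsim \tfrac14\log\log x$ only for $\lvert t\rvert$ bounded away from $0$, and since $\sum_{p\leq x}(1-f(p))/p$ can be as large as $2\log\log x$, while $K\approx 0.3286 > 1/4$, this does not dominate $K\sum(1-f(p))/p$. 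So the infimum form of Hal\'asz cannot absorb the $\log\log(\lvert t\rvert+3)$ growth; the obstacle you flag in your last paragraph is in fact a genuine missing step, not just a matter of optimizing constants.

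The paper instead follows Montgomery/Tenenbaum (Theorem~\ref{thm:matteotheorem} and Lemma~\ref{boundedvariationest}), replacing the infimum over $t$ by an $L^2$-weighted quantity
\[
H(\alpha)^2 = \sum_{k\in\Z}\frac{1}{(k-\tfrac12)^2+1}\max_{\sigma=1+\alpha,\ \lvert\tau-k\rvert\leq \sfrac12}\lvert F(s)\rvert^2,
\]
and the bound $\lvert G(x)\rvert \ll \frac{x}{\log x}\int H(\alpha)\,d\alpha/\alpha$. The crucial point is that the weight $\bigl((k-\sfrac12)^2+1\bigr)^{-1}$ supplies decay in $\tau$, so the $(\log(\lvert\tau\rvert+3))^{2+2K}$ factor produced by the $K$-identity (Lemma~\ref{lemma:explicitbigconst} and Lemma~\ref{lambdalemma}) is absorbed into the convergent sum $\sum_k \log^{2+2K}(\lvert k\rvert+4)/\bigl((k-\sfrac12)^2+1\bigr)$, which is the computable constant $\nu_3$. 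The $\log\log(\lvert t\rvert+3)$ growth therefore never has to be beaten by a pointwise infimum. If you wish to salvage your route, you would need either to re-prove an explicit Hal\'asz bound that carries this $L^2$ weighting intrinsically, or to find a substitute argument (specific to real completely multiplicative $f$) that pins the infimum to a neighbourhood of $t=0$ where the discrepancy is genuinely $O(1)$; the former is essentially what the paper does.
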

We can now easily prove Lemma \ref{lemma:M-L}.
\begin{proof}[Proof of Lemma \ref{lemma:M-L}]
Here, we follow the proof of Lemma B in \cite{Fromm}. 
Theorem \ref{thm:forresttheorem} gives
\begin{equation}
\label{eq:lm1}
    \left(\frac{|\mathbf{M}_f(x)|+\littleo{1}}{9.75\cdot 10^{5}+\littleo{1}}\right)^{-\frac{1}{2K}}\ge e^{\frac{u}{2}} \quad \text{and} \quad 
  \frac{1}{K} \log \left(\frac{|\mathbf{M}_f(x)|+\littleo{1}}{ 9.75\cdot 10^{5}+\littleo{1}}\right)^{-1}\ge u.
\end{equation}
It is easy to see that
\begin{equation*}
    \mathbf{L}_f(x)=\frac{1}{x\log x}\sum_{n\ge x}(1*f)(n)+\littleo{1},
\end{equation*}
and, by Theorem \ref{theo:lower}, we obtain 
\begin{align}
\label{eq:1}
\mathbf{L}_f(x) \ge \left(0.2+\littleo{1}\right) e^{-u(1.42e^{u/2}+1/2)}\log x+\littleo{1}.
\end{align}
The result follows substituting \eqref{eq:lm1} in \eqref{eq:1} and remembering that $|\mathbf{M}_f(x)|\ge c$.
\end{proof}
In Section \ref{sec:Dickman} we will prove Theorem \ref{theo:lower}. In Section \ref{sec:parexp} we will prove a partially explicit version of an upper bound for the mean value of multiplicative functions, that works as an intermediate result for Theorem \ref{thm:forresttheorem}. In Section \ref{sec:last}, we introduce some explicit bounds related to prime numbers; applying these results to those obtained in the previous sections, we conclude with a proof of Theorem \ref{thm:forresttheorem}. To ease the understanding of the relationships between the results we introduce the following scheme.
\tikzstyle{abstract}=[rectangle, draw=black, rounded corners, fill=white, drop shadow,
        text centered, anchor=north, text=black, text width=4cm]
\tikzstyle{myarrow}=[->, >=open triangle 90, thick]
\tikzstyle{line}=[-, thick]

\begin{center}
\begin{tikzpicture}[node distance=1cm]
    \node (Theorem13) [abstract, rectangle split, rectangle split parts=1]
        {
            \textbf{Theorem \ref{Theo:exp}}
        };
    \node (Theorem12) [abstract, rectangle split, rectangle split parts=1, below=of Theorem13]
        {
            \textbf{Theorem \ref{lemma:M-L}}
        };
        \draw[myarrow] (Theorem12.north) -- ++(0,0.8) -|(Theorem13.south);
    \node (Lemma4910) [abstract, rectangle split, rectangle split parts=1, below=of Theorem12]
        {
            \textbf{Lemma \ref{Fexplicit} \& \ref{lemma:v_1}}
            \nodepart{second}
        };
    \node (Theorem14) [abstract, rectangle split, rectangle split parts=1, left=of Lemma4910]
        {
            \textbf{Theorem \ref{theo:lower}}
            \nodepart{second}
        };
    \node (Theorem15) [abstract, rectangle split, rectangle split parts=1, right=of Lemma4910]
        {
            \textbf{Theorem \ref{thm:forresttheorem}}
            \nodepart{second}
        };
    \draw[myarrow] (Theorem14.north) -- ++(0,0.5) -| (Theorem12.south);
    \draw[line] (Theorem14.north) -- ++(0,0.5) -| (Theorem15.north);  
    
     \node (Theorem21) [abstract, rectangle split, rectangle split parts=1, below=of Theorem14]
        {
            \textbf{Theorem \ref{lemma:dick}}
            \nodepart{second}
        };
        \draw[myarrow] (Theorem21.north) -- ++(0,0.8) -| (Theorem14.south);
         \node (Theorem32) [abstract, rectangle split, rectangle split parts=1, below=of Lemma4910]
        {
            \textbf{Theorem \ref{thm:matteotheorem}}
            \nodepart{second}
        };
         \draw[line] (Lemma4910.south) |-  (Theorem32.north);  
        \node (Theorem31) [abstract, rectangle split, rectangle split parts=1, below=of Theorem32]
        {
            \textbf{Theorem \ref{lemma:zeta1}}
            \nodepart{second}
        };
        \draw[myarrow] (Theorem31.north) -- ++(0,0.93) -| (Theorem32.south);
        \draw[myarrow] (Theorem32.north) -- ++(0,0.5) -| (Theorem15.south);
      \node (Lemma48) [abstract, rectangle split, rectangle split parts=1, below=of Theorem15]
        {
            \textbf{Lemma \ref{lambdalemma}}
            \nodepart{second}
        };
        \draw[myarrow] (Lemma48.north) -- ++(0,0.8) -| (Theorem15.south);
     \node (Lemma47) [abstract, rectangle split, rectangle split parts=1, below=of Lemma48]
        {
            \textbf{Lemma \ref{lemma:explicitbigconst}}
            \nodepart{second}
        };
        \draw[myarrow] (Lemma47.north) -- ++(0,0.8) -| (Lemma48.south);
    \node (Proposition446) [abstract, rectangle split, rectangle split parts=1, below=of Lemma47]
        {
            \textbf{Proposition \ref{mertensexplicit}, \ref{mertensexplicit1} \& \ref{lsquaresum}}
            \nodepart{second}
        };
        \draw[myarrow] (Proposition446.north) -- ++(0,0.8) -| (Lemma47.south);
        \draw[myarrow] (Lemma47.north) -- ++(0,0.8) -| (Lemma48.south);
    \node (Lemma43) [abstract, rectangle split, rectangle split parts=1, below=of Theorem31]
        {
            \textbf{Lemma \ref{boundedvariationest}}
            \nodepart{second}
        };
        \draw[myarrow] (Lemma43.north) -- ++(0,0.5) -| (Lemma47.south);
        \node (1921) [abstract, rectangle split, rectangle split parts=1, below=of Lemma43]
        {
            \textbf{\eqref{eq:R31} \& \eqref{eq:R32}}
            \nodepart{second}
        };
        \draw[myarrow] (1921.north)  -- (Lemma43.south);
        \node (Lemma42) [abstract, rectangle split, rectangle split parts=1, left=of 1921]
        {
            \textbf{Lemma \ref{lemma:piup}}
            \nodepart{second}
        };
        \draw [myarrow]  (Lemma42) -- (1921);
        \node (Lemma41) [abstract, rectangle split, rectangle split parts=1, below=of Theorem21]
        {
            \textbf{Lemma \ref{lemma:lilow}, \eqref{pi} \& \eqref{eq:piexp}}
            \nodepart{second}
        };
        \draw[myarrow] (Lemma41.south)  -- (Lemma42.north);

\end{tikzpicture}
\end{center}
\section{Lower bound for the mean value theorem for a non-negative multiplicative function }
\label{sec:Dickman}
The aim of this section is to prove Theorem \ref{theo:lower}.
We start by giving an explicit lower bound for the Dickman function, $\rho(x)$, defined by  
\begin{equation*}
    x\rho '(x)+\rho (x-1)=0,
\end{equation*}
with initial conditions $\rho (x)=1$ for $0\leq x \leq 1$. Note that we will follow Buchstab's approach from \cite{buchstab} for large $x$, alongside computations for small $x$.
\begin{lemma}
\label{lemma:dick}
Assuming $x\ge 1$, we have
\begin{equation}
\label{eq:df}
\rho(x)\ge x^{-1.42x}.
\end{equation}
\end{lemma}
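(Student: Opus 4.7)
The plan is to combine a direct computation for small $x$ with a Buchstab-style iterative argument for large $x$, exploiting the functional equation
\[
x\rho(x) = \int_{x-1}^{x} \rho(t)\, dt,
\]
which follows by integrating $t\rho'(t) = -\rho(t-1)$ and using $\rho \equiv 1$ on $[0,1]$. Throughout, write $\phi(x) := x^{-1.42x}$; our goal is $\rho(x) \geq \phi(x)$ for $x \geq 1$.

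For small $x$, say $x \in [1, X_0]$ with $X_0$ an explicit constant, the idea is to compute $\rho$ piecewise. On $[1,2]$, integration gives the closed form $\rho(x) = 1 - \log x$, and one verifies $1 - \log x \geq \phi(x)$ by endpoint-and-derivative comparison: both sides equal $1$ at $x=1$, while the derivatives there are $-1$ and $-1.42$ respectively, so $\phi$ drops away faster; at $x=2$ one has $1-\log 2 \approx 0.307 \gg 2^{-2.84} \approx 0.140$, giving a sizable margin. Continuing on $[n, n+1]$ via the recursion $\rho(x) = \rho(n) - \int_n^x \rho(t-1)/t\, dt$ and the known expression on $[n-1, n]$, one produces explicit lower bounds for $\rho$ at each integer, against which $\phi$ is easy to compare. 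The margin grows as $n$ increases, so this stage extends to an $X_0$ chosen large enough to match the large-$x$ regime.

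For $x > X_0$, the plan is to follow Buchstab's approach from \cite{buchstab}: iterate the functional equation to produce an explicit lower bound of the shape $\rho(x) \geq \exp\bigl(-x(\log x + \log\log x - 1 + \varepsilon(x))\bigr)$, with an effective error $\varepsilon(x) \to 0$. The inequality $\rho(x) \geq \phi(x)$ then reduces to
\[
\log\log x + \varepsilon(x) \;\leq\; 0.42\,\log x + 1,
\]
which holds with considerable slack once $\log x$ is moderately large. Equivalently, one can attempt a direct induction: assuming $\rho(t) \geq \phi(t)$ on $[x-1, x]$, the Buchstab identity reduces the inductive step to
\[
\int_{x-1}^{x} \phi(t)\, dt \;\geq\; x\,\phi(x),
\]
which, using the linear upper bound $-\log \phi(t) \leq -\log\phi(x) - 1.42(\log x + 1)(t-x)$ on $[x-1,x]$ (from the monotonicity of $-\log \phi'$), gives
\[
\int_{x-1}^{x}\phi(t)\,dt \;\geq\; \phi(x)\cdot \frac{x^{1.42} e^{1.42} - 1}{1.42(\log x + 1)},
\]
so the required bound reduces to the elementary inequality $x^{0.42} e^{1.42} \gtrsim 1.42(\log x + 1)$, which is valid well past $x = 2$.

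The main obstacle is calibrating the two regimes: making Buchstab's error term effective with a small enough threshold $X_0$ that the piecewise computation on $[1, X_0]$ stays tractable, while retaining enough slack in $\phi$ (the constant $1.42$, rather than, say, $1.01$) to absorb the accumulated constants from the Buchstab iteration at the switchover. Careful bookkeeping of the constants in the iterative bound, together with log-concavity of $\rho$ to control the monotone behavior of the integrals, should close the argument.
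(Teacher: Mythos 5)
Your overall plan --- verify the bound numerically for small $x$ and invoke Buchstab for large $x$ --- is the same as the paper's, which uses Sage's built-in Dickman function on $[1,130]$ (where the stronger exponent $1.15$ in fact holds) and then quotes Buchstab's explicit lower bound, valid once $\delta = 1/(\log x + 1 + \log x/x) < 1/3$, to cover $x\geq 130$. So the primary route you describe is essentially identical to the paper's.

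The alternative self-contained induction you sketch is the more interesting contribution, but as written it contains a sign error that breaks the key estimate. Writing $\phi(x):=x^{-1.42x}$ and $g(t) := -\log\phi(t) = 1.42\,t\log t$, your stated bound $g(t) \leq g(x) - g'(x)(t-x)$ for $t \in [x-1,x]$ is true but only trivially (its right side exceeds $g(x) \geq g(t)$), and exponentiating and integrating it gives
\[
\int_{x-1}^{x}\phi(t)\,dt \;\geq\; \phi(x)\,\frac{1 - e^{-1.42}x^{-1.42}}{1.42(\log x + 1)},
\]
which is less than $\phi(x)$ and nowhere near the target $x\,\phi(x)$; the claimed lower bound $\phi(x)\bigl(x^{1.42}e^{1.42}-1\bigr)/\bigl(1.42(\log x+1)\bigr)$ does not follow from your stated inequality. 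The fix is to use the derivative at the \emph{left} endpoint: since $g'$ is increasing, $g(x) - g(t) = \int_t^x g' \geq g'(x-1)(x-t)$, so $\phi(t) \geq \phi(x)\exp\bigl(1.42(\log(x-1)+1)(x-t)\bigr)$ and
\[
\int_{x-1}^{x}\phi(t)\,dt \;\geq\; \phi(x)\,\frac{(x-1)^{1.42}e^{1.42} - 1}{1.42\bigl(\log(x-1)+1\bigr)}.
\]
The needed inequality $\int_{x-1}^x\phi(t)\,dt \geq x\,\phi(x)$ then becomes $(x-1)^{1.42}e^{1.42} - 1 \geq 1.42\,x\bigl(\log(x-1)+1\bigr)$, which one checks holds for all $x\geq 2$; together with the exact form $\rho(x)=1-\log x$ on $[1,2]$ this closes a self-contained inductive argument that, once the sign is corrected, is arguably cleaner than the paper's reliance on machine computation over $[1,130]$.
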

\begin{proof}
Using the the built-in Dickman function in Sage, we determine that for $1 \leq x \leq 130$ we can take as an exponent $1.15$. Note that we are limited to this interval due to the computational complexity.  We can thus use the following result due to Buchstab \cite{buchstab}, that tells that for $x\ge 6$ and $\delta=\frac{1}{\log x +1+\frac{\log x}{x}}< \frac{1}{3}$, we have
\begin{equation}
\label{eq:df1}
\rho(x)\ge \exp\left(-x\left(1+\frac{1}{\log x}\right)\left(\log (x+\delta)+\log \frac{1}{\delta} -1\right)-2\log x\right),  
\end{equation}
and the result follows taking $x\ge 130$.
\end{proof}
It worth noting that, by \cite{buchstab}, the right size for the constant in the exponent of \eqref{eq:df} is $1+o(1)$. Since we want a uniform result, a lower bound for the $1+o(1)$ term appears, by computation, to be $1.15$. Obtaining this result appears difficult as \eqref{eq:df1} does not give a good estimate for small values of $x$. One might get around this by making explicit other asymptotic results for $\rho(x)$, such as the one in \cite{deBruijn}, but we have not pursued this here.
We can now prove Theorem \ref{theo:lower}.
\begin{proof}[Proof of Theorem \ref{theo:lower}]
This is Theorem 2 \cite{Hildebrand} with $K=2$, $K_2=1.1$ and $z=2$, used together with $\max \left(0,1-(1*f)(p)\right)\leq \frac{1-f(p)}{2}$ and Lemma \ref{lemma:dick}. We also need to note that
\begin{equation*}
\prod_{p\leq x} \left(1-\frac{1}{p}\right)\left( 1+\frac{(1*f)(p)}{p}+\frac{(1*f)(p^2)}{p^2}\cdots\right)=\prod_{p\leq x}\frac{1-\frac{1}{p}}{1-\frac{(1*f)(p)}{p}}
\end{equation*}
\begin{equation*}
 \ge e^{-u}\exp\left(\sum_{p\leq x} \frac{1}{p}\right)\exp\left(\sum_{p\leq x}\left(\log \left(1-\frac{1}{p}\right)+\frac{1}{p}\right)\right).
\end{equation*}
We can conclude using Theorem 5 and Corollary 1 \cite{rosser1962}, that gives
\begin{equation*}
\exp\left(\sum_{p\leq x}\left(\log \left(1-\frac{1}{p}\right)+\frac{1}{p}\right)\right)= \exp(M-\gamma)\ge \exp(-0.32),
\end{equation*}
with $M$ the Meissel--Mertens constant and $\gamma$ the Euler--Mascheroni constant.
\end{proof}

\section{A partially explicit upper bound for the mean value of multiplicative functions}
\label{sec:parexp}
In this section, we aim to prove an explicit version of a theorem of Montgomery \cite{montgomery1978}, regarding the mean value of multiplicative functions. He restricted his interest, as will we, to completely multiplicative functions. The more general case involves technical changes, see \cite{tenenbaum2015}, which make the leading constant increase significantly.

We start by introducing a well-known, but useful, result.
\begin{lemma}
\label{lemma:zeta1}
Assuming $ s=1+\alpha +i\tau$, with $\alpha \searrow 0$ and $|\tau|\leq 1/2$ we have
\begin{equation*}
\left|\frac{\zeta'}{\zeta}(s)\right|\leq \frac{1}{|s-1|}+\bigO(1). 
\end{equation*}
\end{lemma}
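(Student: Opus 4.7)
The plan is to exploit the simple pole of $\zeta(s)$ at $s=1$ via its Laurent expansion, and to absorb everything else into the $\bigO(1)$ term by a compactness argument.

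First I would recall the Stieltjes expansion $\zeta(s) = (s-1)^{-1} + \gamma + \bigO(|s-1|)$ in a neighbourhood of $s=1$, and differentiate term by term (legitimate since the tail is holomorphic and the pole simple) to obtain $\zeta'(s) = -(s-1)^{-2} + \bigO(1)$. Forming the logarithmic derivative and clearing the pole by multiplying numerator and denominator by $s-1$ rearranges this into
\[
\frac{\zeta'(s)}{\zeta(s)} = \frac{-(s-1)^{-1} + \bigO(|s-1|)}{1 + \bigO(|s-1|)}.
\]
For $|s-1|$ small the denominator is bounded away from $0$, so expanding the reciprocal in a geometric series gives $\zeta'(s)/\zeta(s) = -(s-1)^{-1} + \bigO(1)$, and the stated inequality follows from the triangle inequality.

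The only subtle point is the uniformity over the region $\{1+\alpha+i\tau : \alpha\searrow 0,\ |\tau|\leq 1/2\}$. Away from $s=1$ the function $\zeta'/\zeta$ is holomorphic in a neighbourhood of the closed segment $\{1+i\tau : |\tau|\leq 1/2\}$, since $\zeta$ has no zeros on $\mathrm{Re}(s)=1$. Hence on the complement of any fixed small disc centred at $s=1$ it is bounded, while inside the disc the Laurent computation above governs the behaviour. Patching these two regimes yields the uniform $\bigO(1)$ appearing in the statement; I expect this to be the main obstacle only insofar as pinning down the implied constant would require a quantitative version of non-vanishing on the line, but for a qualitative $\bigO(1)$ the classical fact suffices.
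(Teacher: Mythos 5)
Your argument is correct and arrives at the same estimate, but by a genuinely different route than the paper. You start from the Laurent (Stieltjes) expansion of $\zeta$ at $s=1$, differentiate it, and then form the logarithmic derivative as a ratio of two expansions. The paper instead applies Euler--Maclaurin summation to $\sum n^{-s}$ and $\sum (\log n) n^{-s}$ directly, which yields the explicit bounds
\[
\left|\zeta(s)-\frac{1}{s-1}\right|\leq \tfrac{1}{2}(1+|s|),
\qquad
\left|\zeta'(s)+\frac{1}{(s-1)^2}\right|\leq \tfrac{1}{2}(1+|s|),
\]
and only then forms the ratio. The concluding step is the same in both: for $|s-1|\le \sqrt{\alpha^2+\tfrac14}$ with $\alpha\searrow 0$, the main term $\tfrac{1}{|s-1|}\ge 2-\littleo{1}$ dominates the error in the denominator, so it is bounded away from zero and the geometric series trick applies. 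What the paper's route buys is a concrete, numerical error bound — entirely in the spirit of this explicit paper — while yours is the standard, shorter textbook argument at the cost of an unspecified implied constant. One minor simplification of your write-up: the two-regime ``patching'' between a small disc at $s=1$ and its complement is unnecessary here. Since $|\tau|\le \tfrac12$ and $\alpha$ is small, the entire region already lies in a fixed disc about $s=1$ contained in $\Re(s)>\tfrac12$, where $\zeta$ has no zeros and the Laurent computation applies directly; in particular you do not need to invoke non-vanishing of $\zeta$ on the line $\Re(s)=1$ (a nontrivial fact), only that $\alpha>0$ keeps $s$ in the zero-free half-plane $\Re(s)>1$ and that $(s-1)\zeta(s)\to 1$ as $s\to 1$.
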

\begin{proof}
By Euler--Maclaurin, we have
\begin{align*}
\sum_{n\leq  N} \frac{1}{n^s}=\int_1^{N} \frac{1}{x^s}dx +\frac{1}{2}\left(\frac{1}{N^s}+1\right)-s\int_1^{N} \frac{1}{x^{s+1}} \left(\{x\}-\frac{1}{2}\right)dx.
\end{align*}
Thus, taking $N\rightarrow \infty$,
\begin{align*}
\left|\sum_{n}^{\infty} \frac{1}{n^s}-\int_1^{\infty} \frac{1}{x^s}dx\right|\leq  \frac{1}{2}(1+|s|).
\end{align*}
Now it follows from
\begin{align*}
\int_1^{\infty} \frac{(\log x)^{\ell}}{x^s}dx =\frac{\ell!}{(s-1)^{\ell+1}},
\end{align*}
that
\begin{align*}
\left|\zeta(s)-\frac{1}{(s-1)}\right|\leq  \frac{1}{2}(1+|s|).
\end{align*}
Proceeding in the same way, we obtain
\begin{align*}
\left|\zeta'(s)+\frac{1}{(s-1)^2}\right|\leq  \frac{1}{2}(1+|s|).
\end{align*}
The result easily follows remembering that $\alpha\searrow 0$.
\end{proof}
Everything is in place to prove an explicit version of the inequality in \cite{montgomery1978}. Note that our result appears slightly different with compared with the cited one, as we have tailored the optimization of the constant for the current application.
\begin{theorem}
\label{thm:matteotheorem}
Let $g$ be a completely multiplicative function such that $|g(n)|\leq 1$. Set
\begin{equation*}
G(x):=\sum_{n \leq x}g(n), \;\;\;\;\;\; F(s):=\sum_{n=1}^{\infty} g(n) n^{-s}.
\end{equation*}
We define
\begin{equation*}
H(\alpha)^2:=\sum_{\substack{k\in \mathbb{Z} }}\frac{1}{(k-1/2)^2+1}\max_{\substack{\sigma=1+\alpha \\ |\tau-k|\leq \frac{1}{2}}}|F(s)|^2.
\end{equation*}
Then, for $x\ge x_0$ large enough, 
\begin{equation}
\label{eq:Gexp}
G(x)\leq \left( 3.14+\littleo{1}\right) \frac{x}{\log x}\int_{1/\log x}^1H(\alpha)\frac{d\alpha}{\alpha}+O_{x_0}\left(\frac{x}{\sqrt{\log x}}\right).
\end{equation}
\end{theorem}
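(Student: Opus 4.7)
The proof will adapt Montgomery's Perron/Cauchy--Schwarz argument from \cite{montgomery1978} into an explicit form. I would begin by establishing a truncated Perron-type formula
\[G(x) = \frac{1}{2\pi i}\int_{c-iT}^{c+iT} F(s)\frac{x^s}{s}\,ds + R(x,T),\]
with $c = 1 + 1/\log x$ and $T$ a suitable cutoff (for instance a small power of $\log x$) chosen so that the explicit Perron remainder $R(x,T)$ stays within the allowed $O_{x_0}(x/\sqrt{\log x})$ error. A standard quantitative version of Perron's formula, e.g.\ from \cite{tenenbaum2015}, supplies the explicit constants needed at this step.

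Next, I would partition the vertical contour into unit intervals $I_k = \{c+i\tau : |\tau - (k-1/2)| \le 1/2\}$ and replace $|F(s)|$ on $I_k$ by the maximum appearing in the definition of $H(\alpha)$. Applying Cauchy--Schwarz with the weights $1/((k-1/2)^2+1)$ then reproduces $H(\alpha)$ on one side, while the dual sum (regularised by the $\alpha$-factor in $|c+i(k-1/2)|$ and controlled via Lemma \ref{lemma:zeta1} for the $\zeta'/\zeta$-type comparison near $s=1$) contributes a factor of $\pi + o(1)$, and hence numerically $3.14$ in the final estimate.

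To pass from the single-height bound to the integrated form $\int_{1/\log x}^1 H(\alpha)\,d\alpha/\alpha$, I would apply the contour estimate not to $G(x)$ itself but to the smooth average $\int_1^x G(t)\,dt/t$, whose Mellin transform carries an extra $1/s$; differencing in $\alpha$ then converts the factor $x^{1+\alpha}$ into $x/\log x$ times the claimed integral. Equivalently, one may iterate the single-$\alpha$ estimate over a geometric grid $\alpha_j = 2^{-j}$ and recognise the resulting sum as a Riemann approximation to $\int H(\alpha)\,d\alpha/\alpha$, with the telescoping error absorbed into the remainder.

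The main obstacle is the simultaneous tracking of constants and error terms: the explicit factors from Perron's formula, from Cauchy--Schwarz, from the evaluation of the dual series, and from the contour shift must combine exactly to $3.14 + o(1)$, while the errors from all four sources must collectively absorb into the single $O_{x_0}(x/\sqrt{\log x})$ term. Because the result is an intermediate step tailored to Theorem \ref{thm:forresttheorem}, one free parameter in the Cauchy--Schwarz step has been tuned with the downstream application in mind rather than chosen to minimise the constant in isolation; choosing it carefully is what keeps the bookkeeping tractable.
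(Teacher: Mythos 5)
Your proposal follows a direct Perron route, but the paper's proof runs through a Plancherel (Parseval) identity for the \emph{derivative} $F'(s)/s$, which is where the weights $\tfrac{1}{(k-1/2)^2+1}$ actually come from. Concretely, the paper first applies Cauchy--Schwarz to $\int_{\sqrt{x}}^x \frac{|G(t)|}{t^2}\,dt$, splitting it into $\int_1^x \frac{(|G(t)|\log t)^2}{t^{3+2\alpha}}\,dt$ and the explicitly computable integral $\int_{\sqrt{x}}^x \frac{dt}{t^{1-2\alpha}\log^2 t}$. Replacing $G(t)\log t$ by $K(t) := \sum_{n\le t} g(n)\log n$ (with error $O(t)$), the first factor is, by Plancherel, exactly $\frac{1}{2\pi}\int_{-\infty}^{\infty} \left|\frac{F'(1+\alpha+i\tau)}{1+\alpha+i\tau}\right|^2 d\tau$. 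It is the $|1+\alpha+i\tau|^{-2}$ in \emph{this} identity that produces the weights, and the factor $|F'|^2 = |F|^2\,|F'/F|^2$ together with $|F'/F| \le |\zeta'/\zeta|$ and Lemma~\ref{lemma:zeta1} that yields the $\pi/\alpha$ contribution. None of this is a Cauchy--Schwarz on a Perron contour with the weights $\tfrac{1}{(k-1/2)^2+1}$: if you try that directly, the dual sum $\sum_k \frac{(k-1/2)^2+1}{|1+\alpha+i(k-1/2)|^2}$ diverges, because Perron only supplies one factor of $1/s$, not $1/s^2$. You partly notice this yourself when you suggest smoothing with $\int_1^x G(t)\,dt/t$ to gain another $1/s$, but the paper achieves the needed $L^2$ smoothing differently and more efficiently, via the $K(t)$ substitution inside an already-squared integral.

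The passage to the $\alpha$-integral is also not "differencing in $\alpha$." The paper uses the identity $\log 2 = \int_t^{t^2}\frac{dy}{y\log y}$ and Fubini to bound $\int_{e^2}^x \frac{|G(t)|}{t^2}\,dt$ by $\frac{1}{\log 2}\int_{e^2}^{x^2}\frac{dy}{y\log y}\int_{\sqrt{y}}^y \frac{|G(t)|}{t^2}\,dt$, and then substitutes the single-height bound $\eqref{eq:Gexp2}$ with $\alpha = 2/\log y$; the variable change $\alpha = 2/\log y$ is what produces $\int H(\alpha)\,d\alpha/\alpha$. Finally, the constant $3.14$ is not $\pi$: it is (to the displayed precision) $\frac{1}{\log 2}\sqrt{9.45/2}$, where $9.45$ bounds $2\int_1^2 e^{2y}/y^2\,dy$ and $1/\log 2$ comes from the $\int_t^{t^2}\frac{dy}{y\log y}$ identity, with $\pi$ already absorbed inside the $\sqrt{\phantom{x}}$ during the Plancherel step. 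The coincidence with $\pi$ is misleading you into attributing the whole constant to the $\zeta'/\zeta$ integral. In short, the central missing idea in your proposal is the mean-square (Plancherel) treatment of $K(t)$; without it, the weighted sum defining $H(\alpha)$ cannot be produced from Perron alone.
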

\begin{proof}
We now establish, for $x\ge x_0$, the following result 
\begin{equation}
\label{eq:Gexp2}
\int_{\sqrt{x}}^x\frac{|G(t)|}{t^2}dt \leq \left( \sqrt{\frac{ 9.45}{2}}+\littleo{1}\right) H\left( \frac{2}{\log x}\right)+O\left(\sqrt{\log x}\right).
\end{equation}
 By the Cauchy--Schwarz inequality, with $\alpha =2/\log x$,
\begin{equation*}
\int_{\sqrt{x}}^x \frac{|G(t)|}{t^2}dt \leq \left(\int_1^x \frac{ \left(|G(t)|\log t\right)^2 }{t^{3+2\alpha}}dt  \int_{\sqrt{x}}^x \frac{1}{\log^2 t ~t^{1-2\alpha}}dt  \right)^{1/2}.
\end{equation*}
We can observe that, with $n\in \mathbb{N}$,
\begin{equation*}
\int_{\sqrt{x}}^x \frac{1}{\log^2 t ~t^{1-2\alpha}}dt\leq \sum_{j=0}^{n-1}\int_{x^{\frac{1}{2}\left(\frac{j}{n}+1\right)} }^{x^{\frac{1}{2}\left(\frac{j+1}{n}+1\right)} } \frac{1}{\log^2 t ~t^{1-2\alpha}}dt
\end{equation*}
\begin{equation*}
\leq \frac{1}{\log^2 x}\sum_{j=0}^{n-1}\frac{4}{\left(\frac{j}{n}+1\right)^2}\int_{x^{\frac{1}{2}\left(\frac{j}{n}+1\right)} }^{x^{\frac{1}{2}\left(\frac{j+1}{n}+1\right)} } \frac{1}{ ~t^{1-2\alpha}}dt  \leq\frac{1}{\log^2 x~\alpha}(e^{\frac{2}{n}}-1)2e^2\sum_{j=0}^{n-1}\frac{e^{2j/n}}{(\frac{j}{n}+1)^2}
\end{equation*}
\begin{equation*}
\leq  \frac{1}{\log^2 x~\alpha}(e^{\frac{2}{n}}-1)2n\int_{1}^{2}\frac{e^{2y}}{y^2}dy\leq \frac{4\cdot 9.45}{\log^2 x~\alpha} .
\end{equation*}
Defining $K(t):=\sum_{n\leq t}g(n)\log n$, then
\begin{equation*}
G(t)\log t-K(t) \ll t.
\end{equation*}
Thus, taking $\alpha=2/\log x$, the proof of \eqref{eq:Gexp2} reduces to that of
\begin{equation*}
\int_{1}^x\frac{|K(t)|^2}{t^{3+2\alpha}}dt \leq \left(\frac{1}{2}+\littleo{1}\right) \frac{H(\alpha)^2}{\alpha}.
\end{equation*}
The equation
\begin{equation*}
\int_0^{\infty} K(e^u)e^{-u\sigma} e^{-iur}du=\frac{-F'(s)}{s}\;\; (\sigma> 1),
\end{equation*}
allows us to write Plancherel's formula as
\begin{equation*}
\int_{1}^x\frac{|K(t)|^2}{t^{3+2\alpha}}dt =\frac{1}{2\pi}\int_{-\infty}^{\infty}\left|\frac{F'(1+\alpha+i\tau)}{1+\alpha+i\tau} \right|^2 d\tau.
\end{equation*}
We assume $T$ arbitrary large. For $|\tau|> T$ we have, by (4.46) in \cite{tenenbaum2015}
\begin{equation*}
\int_{|\tau|> T}\left|\frac{F'(1+\alpha+i\tau)}{1+\alpha+i\tau} \right|^2 d\tau \ll \frac{1}{T}+\frac{1}{\alpha^3T^2}.
\end{equation*}
We now need to estimate the contribution in the complementary range $|\tau|\leq T$. We write
\begin{equation*}
\label{eq:Kexp2}
\int_{|\tau|\leq T}\left|\frac{F'(1+\alpha+i\tau)}{1+\alpha+i\tau} \right|^2 d\tau 
\end{equation*}
\begin{equation*}\leq \sum_{|k| \leq T}\frac{1}{1+(k-1/2)^2}\int_{k-1/2}^{k+1/2}\left|F'(1+\alpha+i\tau) \right|^2 d\tau.
\end{equation*}
The right hand side integral does not exceed
\begin{equation*}
\max_{\substack{\sigma=1+\alpha \\ |\tau-k|\leq 1/2}}|F(s)|^2\int_{k-1/2}^{k+1/2}\left|\frac{F'}{F}(1+\alpha+i\tau) \right|^2 d\tau.
\end{equation*}
We can observe that
\begin{equation*}
\left| \frac{F'}{F}(s)\right|^2\leq \left| \frac{\zeta'}{\zeta}(s)\right|^2,
\end{equation*}
and, choosing $x \ge x_0$ to have $\alpha =2/\log x$ small enough, by Lemma \ref{lemma:zeta1} we obtain
\begin{align*}
\int_{k-1/2}^{k+1/2}\left|\frac{F'}{F}(1+\alpha+i\tau) \right|^2 d\tau & \leq \int_{-1/2}^{1/2}\left|\frac{\zeta'}{\zeta}(1+\alpha+i\tau) \right|^2 d\tau  \\ &\leq  \int_{-1/2}^{1/2}\frac{1}{\alpha^2+\tau^2}d\tau+\bigO(1)=\frac{\pi}{\alpha}+\bigO(1).
\end{align*}
Thus \eqref{eq:Gexp2} is obtained taking $T\rightarrow \infty$.
We now introduce (4.39) from \cite{tenenbaum2015}
\begin{equation}
\label{eq:Gexp1}
|G(x)| \leq \frac{x}{\log x}\int_{1}^x\frac{|G(t)|}{t^2}dt+O\left(\frac{x}{\log x}\right).
\end{equation}
With the above result and using \eqref{eq:Gexp2} we can now finish the proof as follows
\begin{align*}
\int_{e^2}^x \frac{|G(t)|}{t^2}dt &\leq \frac{1}{\log 2}\int_{e^2}^x \frac{|G(t)|}{t^2}\int_t^{t^2}\frac{dy}{y\log y}dt\\&\leq  \frac{1}{\log 2}\int_{e^2}^{x^2}\frac{dy}{y\log y}\int_{\sqrt{y}}^y \frac{|G(t)|}{t^2}dt  \\& \leq \frac{1}{\log 2}\left( \sqrt{\frac{ 9.45}{2}}+\littleo{1}\right)\int_{1/\log x}^1 \frac{H\left( \alpha\right)}{\alpha} d\alpha +O_{x_0}\left(\sqrt{\log x}\right).
\end{align*}
\end{proof}
\section{Explicit mean value estimates for real multiplicative functions}
\label{sec:last}
In this section we aim to prove Theorem \ref{thm:forresttheorem}. We will first, in subsection \ref{sub:1} and \ref{sub:2}, introduce some useful explicit results and then tackle Theorem \ref{thm:forresttheorem} in subsection \ref{sub:3}.

\subsection{Prime counting estimates} \label{sub:1}
Take $\pi(x)$ to be the prime counting function. We provide two versions of the Prime Number Theorem (PNT), the first good for small $x$ and the second for big. Assuming $x \geq 59$, by \cite{rosser1962} we have
\begin{equation}
\label{pi}
\frac{x}{\log x}\left(1+\frac{1}{2 \log x}\right)\leq \pi(x) \leq \frac{x}{\log x}\left(1+\frac{3}{2 \log x}\right).
\end{equation}
Defining
\begin{equation}
\label{eq:li}
\textrm{li}(x)=\int_0^x \frac{1}{\ln y} dy
\end{equation} 
and taking $x\ge 229$, by Corollary 2 \cite{Trudgian}, we have
\begin{align}
\label{eq:piexp}
|\pi(x)-\textrm{li}(x)|\leq  x\frac{0.2795}{(\log x)^{3/4}}\exp\left(-\sqrt{\frac{\log x}{6.455}}\right).
\end{align}
Note that there is a better version of the PNT due to Platt and Trudgian \cite{Platt2019}. However, we will turn the above result into a uniform one and the improvement obtained using Platt and Trudgian's result is not clear and would make the following exposition longer and more complicated. We also note that another way to improve the result could be using the improved zero-free region for the Riemann zeta function given in \cite{Mossinghoff}.
We now provide some useful bounds on $\textrm{li}(x)$.
\begin{lemma}
\label{lemma:lilow}
For $x\ge 2$ we have
\begin{equation*}
\textrm{li}(x)\ge \frac{x}{\log x}\left(1+\frac{1}{\log x}\right).
\end{equation*}
\end{lemma}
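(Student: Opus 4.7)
The natural approach is integration by parts applied to $\textrm{li}(x) = \int_0^x \frac{dy}{\log y}$, which yields the standard asymptotic expansion. With $u = 1/\log y$ and $dv = dy$, one finds the antiderivative identity $\int dy/\log y = y/\log y + \int dy/\log^2 y$, and applying this once more with $u = 1/\log^2 y$ gives $\int dy/\log^2 y = y/\log^2 y + 2\int dy/\log^3 y$. Chaining these two identities on an interval $[x_0, x]$ produces
\[\textrm{li}(x) = \frac{x}{\log x} + \frac{x}{\log^2 x} + 2\int_{x_0}^x \frac{dy}{\log^3 y} + \Bigl[\textrm{li}(x_0) - \frac{x_0}{\log x_0} - \frac{x_0}{\log^2 x_0}\Bigr].\]
Since the remainder integral is non-negative whenever $y > 1$, the problem reduces to verifying the inequality at a single base point $x_0 > 1$ (so that the bracketed constant is non-negative) and handling the residual range directly if necessary.

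A cleaner, equivalent reformulation is to set $f(x) := \textrm{li}(x) - \frac{x}{\log x}\bigl(1 + \frac{1}{\log x}\bigr)$ and differentiate term-by-term. Using $\textrm{li}'(x) = 1/\log x$ together with $\frac{d}{dx}\bigl[\frac{x}{\log x} + \frac{x}{\log^2 x}\bigr] = \frac{1}{\log x} - \frac{2}{\log^3 x}$ gives the pleasant identity
\[f'(x) \;=\; \frac{2}{\log^3 x} \;>\; 0 \qquad (x > 1),\]
so $f$ is strictly increasing on $(1, \infty)$. Hence the lemma follows once $f(x_0) \geq 0$ is established at a single point $x_0$, after which it holds automatically for every $x \geq x_0$.

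The main obstacle is therefore purely arithmetic rather than conceptual: one must verify the base case with enough numerical precision, since the inequality is not loose for small $x$. This can be handled either via the power-series expansion $\textrm{li}(x) = \gamma + \log\log x + \sum_{k \geq 1} (\log x)^k/(k \cdot k!)$ with a controlled truncation error, or by a short tabulation of $\textrm{li}$ at selected values. Once the base case is in hand, the monotonicity of $f$ closes the bound for all larger $x$, and any initial residual range is dispatched by direct computation.
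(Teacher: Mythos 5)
Your approach matches the paper's at a structural level: repeated integration by parts on $\textrm{li}$ to extract the first two terms of the asymptotic expansion, followed by a sign argument on the remainder. Your reformulation via $f(x) := \textrm{li}(x) - \frac{x}{\log x}\left(1 + \frac{1}{\log x}\right)$ with $f'(x) = \frac{2}{\log^3 x} > 0$ on $(1,\infty)$ is in fact \emph{more careful} than the paper's. The paper writes $\textrm{li}(x) = \frac{x}{\log x} + \frac{x}{\log^2 x} + \int_0^x \frac{2\,dy}{\ln^3 y}$ and asserts that the remainder integral is positive for $x\ge 2$; but as a Cauchy principal value that integral is not even convergent, since near $y=1$ one has $\frac{1}{\ln^3 y} = \frac{1}{(y-1)^3} + \frac{3/2}{(y-1)^2} + O\!\left(\frac{1}{|y-1|}\right)$ and the even $(y-1)^{-2}$ term makes the principal value diverge. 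Your formulation, starting the remainder integral at a genuine base point $x_0 > 1$ and carrying the constant $f(x_0)$, is the right way to make the argument well-posed.

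However, there is a genuine gap in the step you defer to "direct computation": the base case fails, and so does the lemma as stated. Numerically $\textrm{li}(2) \approx 1.045$ while $\frac{2}{\log 2}\left(1 + \frac{1}{\log 2}\right) \approx 7.05$, so $f(2) < 0$. Since $f$ is increasing, it stays negative until roughly $x \approx 10.3$ (e.g., $f(10)\approx -0.06$ but $f(11)\approx +0.09$). The "residual range" $[2,\,\sim\!10.3]$ therefore cannot be dispatched by computation — the inequality genuinely fails there. The correct conclusion available from your monotonicity argument is that the bound holds for $x \ge 11$ (or whatever base point you explicitly verify), and the lemma's lower threshold should be raised accordingly. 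This is harmless for the paper, because Lemma \ref{lemma:lilow} is only invoked in the range $x \ge 10^5$ inside the proof of Lemma \ref{lemma:piup}, where it is valid; but your proposal should not leave the base-case numeric as an afterthought, since that is precisely where the claimed statement breaks.
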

\begin{proof}
By repeatedly integrating \eqref{eq:li} by parts, we have
\begin{equation*}
\textrm{li}(x)=\frac{x}{\log x}+\frac{x}{\log^2 x}+\int_0^x \frac{2}{\ln^3 y} dy,
\end{equation*}
and the result follows by observing that the last integral is positive for $x\ge 2$.
\end{proof}
From Lemma 5.9 \cite{Bennett} we have, for $x\ge 1865$ 
\begin{equation}
\label{eq:liupp}
\textrm{li}(x)\leq \frac{x}{\log x}\left(1+\frac{3}{2\log x}\right)+\textrm{li}(2).
\end{equation}

We can now prove the main lemma.
\begin{lemma}
\label{lemma:piup}
For all $x\ge 2$ we have
\begin{align}
\label{eq:pi1}
|\pi(x)-\textrm{li}(x)|\leq 0.4897 \frac{x}{\log x},
\end{align}
\begin{align}
\label{eq:pi2}
|\pi(x)-\textrm{li}(x)|\leq 1.3597 \frac{x}{\log^2 x},
\end{align}
and 
\begin{align}
\label{eq:pi3}
|\pi(x)-\textrm{li}(x)|\leq 0.1522 x \exp\left(-\sqrt{\frac{\log x}{6.455}}\right).
\end{align}
\end{lemma}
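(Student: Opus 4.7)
The plan is to establish each of the three inequalities by partitioning $[2,\infty)$ into a large-$x$ regime, handled via \eqref{eq:piexp} (valid for $x \geq 229$) and, where needed, the sandwich obtained from \eqref{pi}, Lemma \ref{lemma:lilow}, and \eqref{eq:liupp}; together with a small-$x$ regime $2 \leq x < 229$ checked by direct numerical verification at the finitely many jump points of $\pi$.

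For \eqref{eq:pi3}, multiplying \eqref{eq:piexp} through by $\exp(\sqrt{\log x/6.455})/x$ reduces the claim to $0.2795/(\log x)^{3/4} \leq 0.1522$, equivalently $\log x \geq (0.2795/0.1522)^{4/3} \approx 2.25$, which holds throughout $x \geq 229$. For \eqref{eq:pi1}, the analogous reduction asks for $0.2795 (\log x)^{1/4} \exp(-\sqrt{\log x/6.455}) \leq 0.4897$. Writing $u = \log x$ and solving $1/(4u) = 1/(2\sqrt{6.455 u})$, the global maximum of $u^{1/4}\exp(-\sqrt{u/6.455})$ on $(0,\infty)$ sits at $u = 6.455/4$ with value $(6.455/4)^{1/4}\exp(-1/2) \approx 0.68$; so $0.2795$ times this maximum is well below $0.4897$, and \eqref{eq:pi1} follows from \eqref{eq:piexp} for every $x \geq 229$.

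The delicate case is \eqref{eq:pi2}. The same reduction would demand $0.2795 u^{5/4}\exp(-\sqrt{u/6.455}) \leq 1.3597$, but the interior maximum of $u^{5/4}\exp(-\sqrt{u/6.455})$ sits at $u \approx 40.34$ with value around $8.3$, so $0.2795$ times this maximum exceeds $1.3597$ and \eqref{eq:piexp} alone cannot cover all $x \geq 229$. To bridge the gap I would invoke the sandwich: combining $\pi(x) \leq \frac{x}{\log x}\bigl(1 + \frac{3}{2\log x}\bigr)$ from \eqref{pi} with the lower bound in Lemma \ref{lemma:lilow} and the upper bound in \eqref{eq:liupp} yields
\[
|\pi(x) - \textrm{li}(x)| \leq \frac{x}{\log^2 x} + \textrm{li}(2) \quad \text{for } x \geq 1865,
\]
and since $x/\log^2 x$ is increasing for $x \geq e^2$, the single inequality $\textrm{li}(2) \leq 0.3597\, x/\log^2 x$ at $x = 1865$ propagates to all $x \geq 1865$, giving \eqref{eq:pi2} on that range. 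On the initial window $[229, 1865]$ both endpoints of $u = \log x$ lie well to the left of $40.34$, and $u^{5/4}\exp(-\sqrt{u/6.455})$ is monotone increasing there, so a direct evaluation at the right endpoint $\log 1865 \approx 7.53$ verifies $0.2795 u^{5/4}\exp(-\sqrt{u/6.455}) < 1.3597$ throughout this window; hence \eqref{eq:piexp} suffices on $[229, 1865]$, and the two ranges overlap.

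The remaining range $2 \leq x < 229$ is a finite numerical check, since $|\pi(x) - \textrm{li}(x)|$ jumps only at primes and the right-hand sides are smooth: it suffices to evaluate each inequality at every prime below $229$ (and to inspect the very short initial stretch on which the right-hand sides of \eqref{eq:pi1} and \eqref{eq:pi2} fail to be monotone, handled by hand). I expect the main obstacle to be the bookkeeping for \eqref{eq:pi2}: one must verify that the \eqref{eq:piexp}-range and the sandwich-range genuinely overlap, with no gap where both estimates are too weak to give the claimed constant $1.3597$; everything else reduces to one-variable calculus and a short finite computation.
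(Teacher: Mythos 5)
Your proposal is correct, but it takes a noticeably different route from the paper, particularly for \eqref{eq:pi1} and \eqref{eq:pi2}. The paper's proof is terse: for \eqref{eq:pi1} and \eqref{eq:pi2} it verifies $2 \le x \le 10^5$ by direct computation and then invokes the sandwich \eqref{pi}, Lemma~\ref{lemma:lilow}, and \eqref{eq:liupp} for $x \ge 10^5$ (never touching \eqref{eq:piexp} for these two bounds); for \eqref{eq:pi3} it computes on $2 \le x \le 10^3$ and applies \eqref{eq:piexp} for $x \ge 10^3$. You instead observe that \eqref{eq:piexp} alone handles \eqref{eq:pi1} on its entire range of validity $x \ge 229$ (since $0.2795\,u^{1/4}e^{-\sqrt{u/6.455}}$ has global maximum $\approx 0.191 < 0.4897$), and you carry out a more careful analysis for \eqref{eq:pi2}: \eqref{eq:piexp} on $[229,1865]$ where $u^{5/4}e^{-\sqrt{u/6.455}}$ is still small, and the sandwich $|\pi-\textrm{li}| \le x/\log^2 x + \textrm{li}(2)$ together with the absorption $\textrm{li}(2) \le 0.3597\, x/\log^2 x$ for $x \ge 1865$. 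This cuts the finite computation from primes up to $10^5$ (roughly ten thousand primes) down to primes below $229$ (fifty primes), at the cost of the extra one-variable calculus you describe; both are valid, and the paper's choice trades insight for a longer numerical check. One small wrinkle in your finite-range description: since $\textrm{li}(x) - \pi(x)$ is continuous increasing between consecutive primes and only drops at primes, the extremal comparison is against the left-hand limit $\textrm{li}(p^-) - \pi(p^-) = \textrm{li}(p) - (\pi(p)-1)$, not merely the value at $p$; and the right-hand sides of \eqref{eq:pi1}, \eqref{eq:pi2} are decreasing on $[2,e]$ and $[2,e^2]$ respectively, so the comparison is not purely at prime points there, as you yourself flag. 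These are bookkeeping issues, not gaps; the argument is sound.
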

\begin{proof}
We first prove \eqref{eq:pi1} and \eqref{eq:pi2}. For $2\leq x \leq 10^5$, we obtain the result by computation.
Assuming $x\ge 10^5$, we obtain the result using \eqref{pi}, Lemma \eqref{lemma:lilow}, and \eqref{eq:liupp}. Now, we prove \eqref{eq:pi3}. For $2\leq x \leq 10^3$, we obtain the result by computation.
Assuming $x\ge 10^3$ we obtain the result using \eqref{eq:piexp}.
\end{proof}
Let $f$ be a $2\pi$-periodic function of bounded variation on $\left[0,2\pi\right]$,
writing $S(f) := \sup_t \lvert f(t) \rvert$, $V(f) := \int_0^{2\pi} \! \left| d\{f(t)\}\right|$, we can now prove the following results.
Assuming $w>1$, by \eqref{eq:pi1} we obtain
\begin{align}
\label{eq:R1}
\left| \frac{|\pi(x)-\textrm{li}(x)|f(\tau \log t)}{t}\right|_w^z\leq \frac{0.9794}{\log w} S(f).
\end{align}
and, by \eqref{eq:pi2},
\begin{align}
\label{eq:R22}
\left| \int_w^z\frac{|\pi(x)-\textrm{li}(x)|f(\tau \log t)}{t^2}dt\right|\leq \frac{1.3597}{\log w} S(f).
\end{align}
By \eqref{eq:pi3} we obtain
\begin{equation} \label{eq:Rsum}
   \left| \int_w^z \! \frac{R(t)}{t} d\{f(\tau \log{t})\} \right| \leq 0.1522 \int_{\tau \log{w}}^{\tau \log{z}} \exp\left(-\sqrt{\frac{v}{6.455\tau}}\right) \, \left|df(v) \right| 
 \end{equation}
 \begin{equation*}
\leq 0.1522 \int_{\tau \log{w}}^{\tau \log{w} + 2\pi} \! \nonumber \sum_{k=0}^\infty \exp\left(-\sqrt{\frac{v+2\pi k}{6.455\tau}}\right) \,\left|df(v) \right| 
\end{equation*}
\begin{equation*}
\leq  0.1522 V(f)  \sum_{k=0}^{\infty} \exp\left( -\sqrt{\frac{\tau \log w+2\pi k}{6.455\tau}}\right).
\end{equation*}
We focus on the two following cases.  For $0< \tau \leq 1$, $w=\exp(\frac{c}{\tau})$, with $c\ge 1$, and
\begin{equation*}
l(c,\tau,x):=\exp\left( -\sqrt{\frac{c+2\pi k}{6.455\tau}}\right),
\end{equation*}
we obtain
\begin{equation}
\label{eq:R31}
\sum_{k=0}^{\infty} l(c, \tau,k)
\leq \sum_{k=0}^{k_1} l(c, \tau,k) +\int_{k_1}^{\infty} l(c, \tau,x) dx 
\end{equation}
\begin{equation*}
\leq \sum_{k=0}^{k_1} l(c, \tau,k) + l(c, \tau,k_1)\frac{\sqrt{6.455 }\sqrt{2\pi k_1+c}+6.455 }{\pi}= O_{k_1,c}(1),
\end{equation*}
where the $O_{k_1, c}(1)$ will be computed later, optimizing on $k_1$ and $c$.
 For $\tau \ge 1$, 
 \begin{equation}
 \label{eq:wcomp}
 \log w=(1+\epsilon)6.455\log^2(\tau+3),
 \end{equation}
  \begin{equation*}
 h(\epsilon, \tau, x):=\exp\left( -\sqrt{(1+\epsilon) \log^2(\tau+3) +\frac{2\pi x}{6.455\tau}}\right),
 \end{equation*}
 with $\epsilon >0$, we obtain 
\begin{equation}
\label{eq:R32}
\sum_{k=0}^{\infty}h(\epsilon, \tau, k)
\leq\sum_{k=0}^{k_2} h(\epsilon, \tau, k)
+\int_{k_2}^{\infty}h(\epsilon, \tau, x) dx \leq 
\end{equation}
\begin{equation*}
 \sum_{k=0}^{k_2} h(\epsilon, \tau, k) +h(\epsilon, \tau, k_2)
\cdot \frac{\sqrt{6.455 \tau}\sqrt{2\pi k_2+\tau(1+\epsilon)6.455\log^2(\tau+3)}+6.455 \tau}{\pi}=O_{\epsilon, k_2}(1),
\end{equation*}
where the $O_{\epsilon, k_2}(1)$ will be computed later optimizing on $k_2$ and $\epsilon$.\\
The above upper bounds \eqref{eq:R31} and \eqref{eq:R32} will be used in the next section to prove an explicit version of Lemma III.4.13 of \cite{tenenbaum2015}. It is interesting to note that within this non-explicit result, a stronger version of \eqref{eq:piexp} was used, to assure that \eqref{eq:R31} and \eqref{eq:R32} would converge for any $w\geq 0$. As there is no explicit version of this stronger PNT, we have that the two series converge only for certain values of $w$. This will come with a loss in a term in Lemma \ref{lemma:explicitbigconst}, and therefore balancing it with the above sums will be fundamental.
\subsection{Some useful lemmas} \label{sub:2}
The bulk of the proof of Theorem \ref{thm:forresttheorem} can be contained in the following lemmas, which encapsulate explicit versions of Lemma III.4.13 of \cite{tenenbaum2015}. 

\begin{lemma}\label{boundedvariationest}
Let $f$ be a $2\pi$-periodic function of bounded variation on $\left[0,2\pi\right]$ with mean value 
\[\overline{f} := \frac{1}{2\pi}\int_0^{2\pi} \! f(t) \, dt.\]
For all real numbers $\tau$, $w$, $z$ such that $\tau \neq 0$, $1 < w < z$, we have 
\begin{equation}\label{annoyinglemmaexplicit}
      \sum_{w < p \leq z} \frac{1}{p}f(\tau \log{p}) = \overline{f} \log\left(\frac{\log{z}}{\log{w}}\right) + E_\tau(w),
  \end{equation}
where, writing $S(f) := \sup_t \lvert f(t) \rvert$, $V(f) := \int_0^{2\pi} \! \left| d\{f(t)\}\right|$.
For $0< |\tau |\leq 1$, $w=\exp(\frac{c}{\tau})$
\begin{equation}\label{aeq:err1}
\left| E_\tau(w) \right| \leq \left( \frac{\pi}{2c}+0.1522O_{k_1, c}(1)\right) V(f) +  \frac{2.3391}{c} S(f),
\end{equation}
with $O_{k_1, c}(1)$ defined in \eqref{eq:R31}, while for $|\tau| \ge 1$, $w=\exp((1+\epsilon)6.455\log^2(\tau+3))$, with~$\epsilon >0$, 
\begin{equation}\label{eq:err2}
\left| E_\tau(w) \right| \leq \left(\frac{\pi}{2}\frac{ 1}{\tau\log{w}} +0.1522O_{k_2,\epsilon}(1) \right)V(f)+ \frac{2.3391}{(1+\epsilon)6.455\log^2(\tau+3)}   S(f),
\end{equation}
with $O_{k_2,\epsilon}(1)$ defined in \eqref{eq:R32}.
\end{lemma}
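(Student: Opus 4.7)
The plan is to convert the prime sum into a Riemann--Stieltjes integral against $d\pi(t)$ and then separate the main term $d\textrm{li}(t) = dt/\log t$ from the error $dR(t)$, where $R(t) := \pi(t) - \textrm{li}(t)$. Concretely, I would write
\[\sum_{w < p \leq z}\frac{f(\tau\log p)}{p} = \int_w^z\frac{f(\tau\log t)}{t}\,d\pi(t) = I_1 + I_2,\]
with $I_1 := \int_w^z f(\tau\log t)/(t\log t)\,dt$ and $I_2 := \int_w^z f(\tau\log t)/t\,dR(t)$. Since the error bounds depend only on $|\tau|$ and $f$ is bounded, I may assume without loss of generality that $\tau > 0$.

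For the main term $I_1$, I substitute $v = \tau\log t$ to obtain $I_1 = \int_{\tau\log w}^{\tau\log z} f(v)/v\,dv$. Decomposing $f(v) = \overline{f} + (f(v) - \overline{f})$, the constant piece yields exactly $\overline{f}\log(\log z/\log w)$, independent of $\tau$. For the zero-mean residual, the antiderivative $F(v) := \int_0^v (f(s) - \overline{f})\,ds$ is $2\pi$-periodic, so integrating by parts and exploiting the sharp relation between $\|F\|_\infty$ and $V(f)$ produces an error of size $\pi V(f)/(2\tau\log w)$; this specialises to the $\pi/(2c)$ factor in \eqref{aeq:err1} under $w = e^{c/\tau}$, and to the $\pi/(2\tau\log w)$ factor in \eqref{eq:err2} in the second regime.

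For the error term $I_2$, I would integrate by parts to obtain
\[I_2 = \left[\frac{R(t)f(\tau\log t)}{t}\right]_w^z + \int_w^z\frac{R(t)f(\tau\log t)}{t^2}\,dt - \int_w^z\frac{R(t)}{t}\,df(\tau\log t).\]
The boundary contribution is bounded via \eqref{eq:R1} by $(0.9794/\log w)S(f)$, and the second integral by $(1.3597/\log w)S(f)$ via \eqref{eq:R22}; their sum, after substituting the chosen value of $w$, reproduces the $2.3391/c$ and $2.3391/((1+\epsilon)6.455\log^2(\tau+3))$ coefficients of $S(f)$ in the two cases. The remaining Stieltjes integral against $df(\tau\log t)$ is attacked via the exponentially decaying PNT bound \eqref{eq:pi3} combined with the $2\pi$-periodic rearrangement in \eqref{eq:Rsum}, and the two regimes $0 < |\tau| \leq 1$ and $|\tau| \geq 1$ are then closed off by \eqref{eq:R31} and \eqref{eq:R32} respectively, supplying the $0.1522\,O_{k_1,c}(1) V(f)$ and $0.1522\,O_{k_2,\epsilon}(1) V(f)$ contributions. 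The main obstacle is precisely this last step: because only the quantitatively weaker PNT remainder \eqref{eq:pi3} is available explicitly, the series $\sum_k\exp(-\sqrt{(\cdot+2\pi k)/(6.455\tau)})$ does not converge for arbitrary $w$, and one must choose $w$ just large enough for the sum to converge while keeping the $1/\log w$ losses multiplying $S(f)$ comparable to the $V(f)$ terms; this is exactly what dictates the two parametrisations $w = e^{c/\tau}$ and $w = \exp((1+\epsilon)6.455\log^2(\tau+3))$ in the statement.
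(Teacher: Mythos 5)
Your proposal follows essentially the same route as the paper: you convert the prime sum to a Stieltjes integral, split $d\pi = d\,\textrm{li} + dR$ with $R(t) = \pi(t) - \textrm{li}(t)$, integrate the $R$-piece by parts to produce exactly the paper's three remainder integrals (handled by \eqref{eq:R1}, \eqref{eq:R22}, and \eqref{eq:Rsum}--\eqref{eq:R32}), and extract the main term plus the $\int_{\tau\log w}^{\tau\log z}(f-\overline{f})\,dv/v$ residual from the $\textrm{li}$-piece. The only (cosmetic) deviation is in bounding that residual: you integrate by parts against the periodic antiderivative $F(v) = \int_0^v (f-\overline f)$ and invoke $\lVert F\rVert_\infty \le \tfrac{\pi}{4}V(f)$, whereas the paper applies the second mean value theorem for integrals together with the same Hall bound $\bigl|\int_a^b(f-\overline{f})\bigr| \le \tfrac{\pi}{4}V(f)$; both deliver the identical factor $\tfrac{\pi}{2}V(f)/(\tau\log w)$. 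To fully close the argument you should make explicit that the "sharp relation" you invoke is precisely Hall's inequality (equation 3.6 of \cite{hall1988}), which the paper cites.
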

\begin{proof}
It is sufficient to prove this for $\tau > 0$. Define $ R(t) := \pi(t) - \textrm{li}(t) $.
By partial summation, we have
\begin{equation}\label{longpartialsummation}
    \begin{aligned}
     \sum_{w < p \leq z} \frac{1}{p}f(\tau \log{p}) &= \int_w^z \! \frac{f(\tau\log{t})}{t\log{t}} \, dt + \left.\frac{R(t) f(\tau\log{t})}{t}\right|_w^z - \int_w^z \! R(t) \, d\left(\frac{f(\tau \log{t})}{t}\right) \\ &= \overline{f} \log\left(\frac{\log{z}}{\log{w}}\right) + \int_{\tau \log{w}}^{\tau \log{z}} \! \left(f(t) - \overline{f}\right) \frac{dt}{t} + \left.\frac{R(t) f(\tau\log{t})}{t}\right|_w^z \\ &- \int_w^z \! \frac{R(t)}{t} d\{f(\tau \log{t})\} + \int_w^z \! \frac{R(t)f(\tau\log{t})}{t^2}dt.
    \end{aligned}
\end{equation}
For the second term in \eqref{longpartialsummation}, we have from Equation 3.6 of \cite{hall1988} that, for any real $a$ and $b$,
\begin{equation}\label{variationbound}
\left|  \int_a^b \! \left(f(t) - \overline{f}\right) \, dt\right| \leq \frac{\pi}{4}V(f).
\end{equation}
By the second mean value theorem for integrals, there exists a $c \in (\tau \log{w}, \tau\log{z}]$ so that
\begin{equation}\label{mvt}
   \int_{\tau \log{w}}^{\tau \log{z}} \! \left(f(t) - \overline{f}\right) \frac{dt}{t}  = \frac{1}{\tau\log{w}}\int_{\tau\log{w}}^c (f(t) - \overline{f}) dt +  \frac{1}{\tau\log{z}}\int_c^{\tau\log{z}} (f(t) - \overline{f}) dt.
\end{equation}

Combining \eqref{variationbound} and \eqref{mvt}, we determine
\begin{equation}\label{estimate3}
\left| \int_{\tau \log{w}}^{\tau \log{z}} \! \left(f(t) - \overline{f}\right) \frac{dt}{t} \right| \leq \frac{\pi}{2}\frac{ V(f)}{\tau\log{w}}. 
\end{equation}
The third term was previously estimated in \eqref{eq:R1}, the fourth in \eqref{eq:Rsum}, \eqref{eq:R31} and \eqref{eq:R32}, and the fifth in \eqref{eq:R22}.
Combining these results together, we have 
\eqref{aeq:err1} and \eqref{eq:err2}.
\end{proof}

Recall Mertens' second theorem in the following forms. 

\begin{proposition}\label{mertensexplicit}
Let $x>1$. We have
\[\sum_{p \leq x} \frac{1}{p} = \log{\log{x}} + M + M'(x),\]
where $M \approx 0.2614\ldots$ and 
\[\lvert M'(x) \rvert \leq \frac{1}{\log^2(x)}.\]
\end{proposition}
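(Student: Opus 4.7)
The proposition is a classical explicit refinement of Mertens' second theorem, and the cleanest plan is to invoke Theorem 5 of Rosser and Schoenfeld \cite{rosser1962} directly (which is already cited earlier in this article for the evaluation of $M-\gamma$). The Rosser--Schoenfeld paper proves exactly an inequality of this shape, with an error term of order $1/\log^2 x$, so the statement is obtainable by reference, possibly together with a direct numerical verification on a small initial range if a sharper constant is desired.

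For a self-contained derivation, the plan I would follow is the standard partial-summation route from Mertens' first theorem. First, start from the explicit version
\[\sum_{p \leq x} \frac{\log p}{p} = \log x - \gamma - \sum_p\left(\log\!\left(1 - \frac{1}{p}\right) + \frac{1}{p}\right) + \varepsilon(x),\]
where $\varepsilon(x)$ is controlled via explicit bounds on Chebyshev's function $\vartheta(x) = \sum_{p\leq x}\log p$ (for which \cite{rosser1962} again supplies bounds of the shape $|\vartheta(x) - x| \leq c\, x/\log^k x$ on various ranges of $x$).

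Next, apply Abel's summation with weight $1/\log p$ to convert the sum $\sum_{p\leq x}\log p /p$ into $\sum_{p\leq x}1/p$:
\[\sum_{p \leq x} \frac{1}{p} = \frac{1}{\log x}\sum_{p \leq x}\frac{\log p}{p} + \int_2^x \frac{1}{t\log^2 t}\left(\sum_{p \leq t}\frac{\log p}{p}\right) dt.\]
Substituting the explicit first Mertens theorem into both pieces, the leading $\log t$ contribution from the integrand produces $\log\log x$ after integration, while the constant pieces $-\gamma - \sum_p(\log(1-1/p) + 1/p)$ collect into Mertens' constant $M$. The remainder $M'(x)$ is then built from the boundary term $\varepsilon(x)/\log x$ plus the integral $\int_2^x \varepsilon(t)/(t\log^2 t)\, dt$.

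The main obstacle is obtaining the clean uniform bound $|M'(x)| \leq 1/\log^2 x$ for all $x > 1$. This requires splitting the integration range and applying the appropriate explicit $\vartheta$-estimate on each subrange, while verifying the small-$x$ window by direct computation. This bookkeeping is exactly what \cite{rosser1962} carries out carefully, which is why a direct citation is the most efficient presentation in the paper.
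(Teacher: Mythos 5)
Your proposal matches the paper's approach: the paper proves this by direct citation, specifically to the Corollary to Theorem 5 of \cite{rosser1962}, which gives $|M'(x)| \leq 1/\log^2 x$ uniformly for all $x > 1$ (Theorem 5 itself gives the sharper $1/(2\log^2 x)$ bound but with a restricted range on one side, which is why the Corollary is the right thing to cite). Your self-contained sketch via Mertens' first theorem and partial summation is the standard route underlying the Rosser--Schoenfeld result, but the paper, like your primary suggestion, simply cites it.
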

\begin{proof}
This is the Corollary to Theorem 5 in \cite{rosser1962}.
\end{proof}
\begin{proposition}\label{mertensexplicit1}
Let $x\ge 2$. We have
\begin{equation}
\log{\log{x}} +0.2614\leq \sum_{p \leq x} \frac{1}{p} \leq  
\log{\log{x}} +0.8666.    
\end{equation}
\end{proposition}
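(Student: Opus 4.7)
The plan is to handle the two inequalities separately, in each case combining Proposition \ref{mertensexplicit} (which gives an asymptotic comparison with a quantitative error term) with a direct numerical check on a bounded range of $x$.

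For the upper bound, I would start from
\[
\sum_{p \le x} \frac{1}{p} \le \log\log x + M + \frac{1}{\log^2 x}.
\]
Using the numerical value $M = 0.26149\ldots$, the right-hand side is at most $\log\log x + 0.8666$ as soon as $1/\log^2 x \le 0.8666 - M \approx 0.605$, that is $x \ge e^{1/\sqrt{0.605}} \approx 3.62$. The remaining range $x \in [2, 3.62)$ is handled directly: $\sum_{p \le x} 1/p$ equals $1/2$ on $[2,3)$ and $5/6$ on $[3,4)$, and on each subinterval the continuous increasing function $\log\log x + 0.8666$ attains its minimum at the left endpoint. The binding case is $x = 2$, where $1/2 \le \log\log 2 + 0.8666 = 0.5001\ldots$; this near-equality essentially forces the constant $0.8666$.

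For the lower bound the same device gives
\[
\sum_{p \le x} \frac{1}{p} \ge \log\log x + M - \frac{1}{\log^2 x},
\]
which exceeds $\log\log x + 0.2614$ precisely when $1/\log^2 x \le M - 0.2614 \approx 9.72 \times 10^{-5}$, i.e.\ $x \ge e^{101.4}$. Checking every $x \in [2, e^{101.4}]$ by brute force is clearly infeasible, so here I would invoke a sharper explicit Mertens-type estimate in place of Proposition \ref{mertensexplicit}, for example a Dusart-style bound $|M'(x)| \le 0.2/\log^3 x$ valid for moderately large $x$, which reduces the threshold to roughly $x \sim 10^5$. On the sub-threshold range one then exploits the fact that $\sum_{p \le x} 1/p$ is piecewise constant with jumps at primes while $\log\log x$ is continuous and strictly increasing, so the infimum of $\sum_{p \le x} 1/p - \log\log x$ over $[p_n, p_{n+1})$ is attained as $x \to p_{n+1}^-$; a finite check over primes up to the threshold then suffices.

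The principal obstacle is precisely this quantitative gap in the lower bound: with only $|M'(x)| \le 1/\log^2 x$ from Proposition \ref{mertensexplicit}, the threshold $e^{101.4}$ is far too large to close with a direct computation, so the argument genuinely depends on a strengthening of Mertens' second theorem. Once such a strengthening is substituted in, both halves of the proposition reduce cleanly to an asymptotic comparison plus a finite, computer-aided verification, the upper bound being considerably easier because the required constant $0.8666$ leaves a wide margin once $x$ is only a little above $2$.
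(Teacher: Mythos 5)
Your treatment of the upper bound is correct and matches the paper: with Proposition~\ref{mertensexplicit} one needs $1/\log^2 x \le 0.8666 - M \approx 0.605$, i.e.\ $x \gtrsim 3.62$, and the finite check reduces to the single binding case $x=2$, where $1/2 \le \log\log 2 + 0.8666 \approx 0.5001$; this is exactly what the paper's remark that ``equality occurs at $x=2$'' is pointing at (the constant $0.8666$ is just $1/2 - \log\log 2 \approx 0.86654$ rounded up).

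Your diagnosis of the lower bound is also on target, and you have put your finger on a genuine thinness in the paper's one-sentence proof. Since $M - 0.2614 \approx 9.7\times 10^{-5}$, the two-sided estimate of Proposition~\ref{mertensexplicit} needs $x \ge e^{101.4}$, which is hopeless to verify directly. The paper cites Theorem 5 of Rosser--Schoenfeld rather than its Corollary, and the one-sided lower bound there is of the shape $\sum_{p\le x}1/p > \log\log x + M - \tfrac{1}{2\log^2 x}$ (valid from some modest $x$ onward); but even this only pushes the threshold down to $\log x \gtrsim 72$, i.e.\ $x \gtrsim 10^{31}$, still far beyond a brute-force computation. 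So ``Theorem 5 plus simple computations'' does not actually close the lower bound as stated, and some sharper explicit Mertens estimate is genuinely needed. Your proposed substitute, a Dusart-type bound $|M'(x)| \le 0.2/\log^3 x$, does the job; just note that its range of validity is $x \ge 2278382$ rather than ``$\sim 10^5$'' (one then checks $2 \le x \le 2.3\times 10^6$ using the piecewise-monotonicity observation you describe, which is an entirely feasible finite computation over the roughly $1.7\times 10^5$ primes in that range). In short: your argument is sound and, for the lower bound, is more careful than the paper's own proof, which omits the decisive sharper input.
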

\begin{proof}
The bounds follow from Theorem 5 in \cite{rosser1962} and some simple computations. Note that the upper bound is optimal, with equality occurring at $x = 2$.
\end{proof}
We also introduce a helpful estimate.
\begin{proposition}\label{lsquaresum}
Let $x>1$. We have
\[\sum_{p \leq x} \frac{\log^2{p}}{p} \leq (1+10^{-8})\frac{\log^2{x}}{2}.\]
\end{proposition}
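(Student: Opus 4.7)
My plan is to attack the sum through Abel summation, reducing it to the Mertens-type sum $A(t) := \sum_{p \le t} \frac{\log p}{p}$, which has sharp explicit estimates in the literature (e.g.\ Theorem 6 of Rosser--Schoenfeld). Concretely, writing $\log^2 p / p = \log p \cdot (\log p / p)$ and applying partial summation yields
\[\sum_{p \le x} \frac{\log^2 p}{p} = A(x)\log x - \int_2^x \frac{A(t)}{t}\,dt.\]
Substituting the explicit expansion $A(t) = \log t - E + r(t)$, where $E \approx 1.332$ is the usual Mertens constant and $r(t)$ is an explicit error, the $\log^2 x$ and $E \log x$ contributions from the boundary term and from the integrand cancel exactly. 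This produces the clean identity
\[\sum_{p \le x} \frac{\log^2 p}{p} = \frac{\log^2 x}{2} + \frac{\log^2 2}{2} - E \log 2 + r(x)\log x - \int_2^x \frac{r(t)}{t}\,dt.\]

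Next, I would bound the error terms on the right. Using only the Rosser--Schoenfeld bound $|r(t)| \le 1/(2\log t)$ gives $|r(x)\log x| \le 1/2$ but the integral $\int_2^x r(t)/t\,dt$ contributes an unwanted $\tfrac12 \log\log x$. To kill this, I would substitute a sharper Mertens estimate of the form $|r(t)| \le C/\log^k t$ with $k \ge 2$ (available from Dusart's work, or derivable from the explicit zero-free regions of Mossinghoff--Trudgian used elsewhere in the paper). Such a bound makes $\int_2^x r(t)/t\,dt$ converge absolutely, so the full error term is bounded by an explicit absolute constant $C_0$.

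Having reduced matters to $\sum_{p \le x} \log^2 p / p \le \frac{\log^2 x}{2} + C_0$, the claim then becomes $C_0 \le 10^{-8}\cdot \frac{\log^2 x}{2}$. Setting $y = \log x$, this is equivalent to $y^2 \ge 2 \cdot 10^8 C_0$, i.e.\ $x \ge x_0 := \exp(\sqrt{2 \cdot 10^8 C_0})$. For $x \ge x_0$ the inequality is immediate. For $2 \le x \le x_0$, the bound is verified by direct computation; the tables of primes needed here only go up to a modest bound (since $C_0$ is $O(1)$), which is entirely practical.

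The main obstacle is the tension between the tightness of the claimed constant $1 + 10^{-8}$ and the quality of available explicit Mertens error terms. If one naively uses the $1/(2\log t)$ error, the residual $\tfrac12 \log\log x$ forces $x_0$ to be astronomically large (roughly $e^{10^5}$ or worse), making direct verification infeasible. The trick is therefore to choose a Mertens bound sharp enough so that $\int_2^\infty r(t)/t\,dt$ converges; this alone is what makes the scheme practically verifiable, and it is where any care in the proof will be concentrated.
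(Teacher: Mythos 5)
Your partial-summation identity is correct, and the high-level plan (reduce to an explicit Mertens-type error and verify a finite range by computation) is in the same spirit as the paper, which instead partial-sums against $\pi(t)$ and invokes Dusart's bound $\pi(t)\le \frac{t}{\log t}(1+\frac{1}{\log t}+\frac{2.51}{\log^2 t})$. The difference in decomposition is cosmetic; what matters is how the error constant is tracked, and that is where your plan breaks.

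The genuine gap is the claim that $x_0=\exp\bigl(\sqrt{2\cdot 10^8 C_0}\bigr)$ is ``modest'' once the integral $\int_2^\infty r(t)/t\,dt$ is made convergent. Convergence is necessary but nowhere near sufficient. If you only control $\lvert r(t)\rvert$ by $1/(2\log t)$ up to some moderate threshold $t_1$ and by $C/\log^k t$ beyond it, then $\int_2^{t_1}\lvert r(t)\rvert/t\,dt$ alone contributes $\tfrac12\log\log t_1 - \tfrac12\log\log 2\approx 1.5$ for $t_1\approx 10^7$, and your boundary constant $\tfrac12\log^2 2 - E\log 2\approx -0.68$ does not come close to cancelling it. So $C_0$ is of size roughly $1$, giving $x_0\approx \exp(1.4\cdot 10^4)$ --- a number with thousands of decimal digits, utterly outside the reach of direct verification. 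Making the integral converge only improves $x_0$ from roughly $e^{10^5}$ to roughly $e^{10^4}$; both are infeasible. This is not a technicality: the paper's own analytic estimate only becomes nonnegative beyond $x>e^{e^{8.634}}$, so the true excess is of this order and cannot be swept under a small positive constant.

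What is actually needed, and what the paper does, is to track the error \emph{with sign} rather than in absolute value. The paper computes $\sum_{p\leq x}\log^2 p/p$ directly for $x<355991$, and for $x\geq 355991$ it splits the partial-summation integral at $355991$, evaluates the lower piece exactly (about $65.2$), and uses Dusart's upper bound only for the tail. The result is a relative error of the form $\epsilon(x)\leq \frac{1.02\log\log x}{\log^2 x}-\frac{8.808}{\log^2 x}+\frac{15.06}{\log^3 x}$, whose large \emph{negative} constant makes $\epsilon(x)<0$ throughout the enormous intermediate range $355991\le x\le e^{e^{8.634}}$, with a tiny positive maximum $\le 10^{-8}$ afterwards. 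To rescue your approach you would similarly need to compute $\int_2^{T} r(t)/t\,dt$ exactly for some moderate $T$ (not bound its absolute value), keep the favourable sign, and only then appeal to a sharp Mertens remainder for $t>T$; simply citing a $k\ge 2$ remainder and absorbing everything into a single positive $C_0$ does not close the argument.
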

\begin{proof}
For $1<x < 355991$, one may verify that
\[\sum_{p \leq x}\frac{\log^2{p}}{p} \leq \frac{\log^2{x}}{2}.\]
When $x \geq 355991$, we begin by applying partial summation to the sum in question
\begin{align}\label{psl2}\sum_{p \leq x} \frac{\log^2{p}}{p} &= \pi(x) \frac{\log^{2}{x}}{x} + \int_2^{355991}\pi(t)\left(\frac{\log^2{t}-2\log{t}}{t^2} \right)dt   \\ &+\int_{355991}^{x} \pi(t)\left(\frac{\log^2{t}-2\log{t}}{t^2}\right) dt .
\end{align}
One may compute the first integral exactly and find that it is bounded by 65.204. For the other instances of $\pi(t)$, it is suitable to use \cite{dusart1998}*{Theorem 1.10.7}, which states that, for $t \geq 355991$,
\begin{equation}\label{dusartpi}\pi(t) \leq \frac{t}{\log{t}} \left(1 + \frac{1}{\log{t}} + \frac{2.51}{\log^2{t}}\right).
\end{equation}
Taking \eqref{dusartpi} in \eqref{psl2} and simplifying, one arrives at 
\[\sum_{p\leq x} \frac{\log^2{p}}{p} \leq \frac{\log^2{x}}{2} \left(1 + \epsilon(x)\right),\]
where
\[\epsilon(x) \leq \frac{1.02\log\log{x}}{\log^2{x}} - \frac{8.808}{\log^2{x}} + \frac{15.06}{\log^3{x}}.\]
We observe that $\epsilon(x) < 0$ until $x > e^{e^{8.634}}$, and then $\epsilon(x)$ takes a maximum at $x_0 \approx e^{e^{9.134}}$. At this maximum, $\epsilon(x_0) \leq 10^{-8}$, establishing the result.
\end{proof}
We can now obtain an important explicit estimate.
\begin{lemma}\label{lemma:explicitbigconst}
Define $f(t) := \lvert \cos(t) - K \rvert$, where $K$ is defined in Theorem \ref{thm:forresttheorem}. Uniformly for $0 < \alpha \leq 1, \tau \in \R$, we have 
\begin{equation}
    \sum_{p \leq \exp(\sfrac{1}{\alpha})} \frac{f(\tau \log{p})}{p} \leq (1-K)\log{\frac{1}{\alpha}} + (2+2K)\log{\log(\lvert \tau \rvert + 3)} + C_0,
\end{equation}
where $C_0 = 7.28$.
\end{lemma}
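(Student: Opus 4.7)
The strategy is to decompose the sum at a threshold $w=w(\tau)$ chosen according to Lemma~\ref{boundedvariationest}, apply that lemma to the tail $w<p\leq \exp(1/\alpha)$ to extract the leading contribution $\bar f(\log(1/\alpha)-\log\log w)+E_\tau(w)$ with $\bar f = 1-K$, and control the head $p\leq w$ by a method adapted to the size of $|\tau|$. Throughout we shall use $V(f) = \int_0^{2\pi}|\sin t|\,dt = 4$ (since $|f'(t)| = |\sin t|$ wherever defined), $S(f) = 1+K$, and $f(0) = 1-K$.

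For $|\tau|\geq 1$, take $w = \exp\{(1+\epsilon)\cdot 6.455\log^2(|\tau|+3)\}$ with $\epsilon>0$ to be optimized. On the head, bound $f\leq 1+K$ and apply Proposition~\ref{mertensexplicit1}; on the tail, apply Lemma~\ref{boundedvariationest} with error bound \eqref{eq:err2}. Expanding $\log\log w = \log[(1+\epsilon)\cdot 6.455] + 2\log\log(|\tau|+3)$, the main terms combine to $(1-K)\log(1/\alpha) + 4K\log\log(|\tau|+3)$ plus a bounded constant, and since $K<1$ the coefficient $4K$ is absorbed into the target $2+2K$. In the alternative sub-case $w>\exp(1/\alpha)$, the trivial bound $(1+K)(\log(1/\alpha)+0.8666)$ on the whole range suffices, because $\log(1/\alpha)<\log\log w$ produces exactly coefficient $2(1+K)=2+2K$ on $\log\log(|\tau|+3)$.

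For $0<|\tau|<1$, the analogous threshold from Lemma~\ref{boundedvariationest} is $T = \exp(c/|\tau|)$ with $c\geq 1$. Using $S(f) = 1+K$ on the head would contribute $2K\log(c/|\tau|)$, which blows up as $|\tau|\to 0$ and cannot be absorbed into $(2+2K)\log\log(|\tau|+3)$. Instead, exploit the Lipschitz continuity $|f(s)-f(0)|\leq |s|$ (constant $L=1$), which gives $f(\tau\log p) \leq (1-K) + |\tau|\log p$. Summed on the head with Proposition~\ref{mertensexplicit1} and an explicit Mertens-type estimate of the shape $\sum_{p\leq x}(\log p)/p \leq \log x + O(1)$, the head contributes $(1-K)\log(c/|\tau|) + c + O(1)$. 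Crucially, the tail's $-(1-K)\log\log T$ coming from Lemma~\ref{boundedvariationest} (with error \eqref{aeq:err1}) exactly cancels the $(1-K)\log(c/|\tau|)$, leaving $(1-K)\log(1/\alpha) + O(1)$. The sub-case $T>\exp(1/\alpha)$ (equivalent to $|\tau|<c\alpha$) admits no tail, but the Lipschitz bound on the whole range gives the same estimate, since $|\tau|/\alpha \leq c$ keeps the $|\tau|\log T$ contribution bounded. The boundary case $\tau = 0$ follows immediately from Proposition~\ref{mertensexplicit1} together with $f(0) = 1-K$.

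The principal obstacle is the numerical optimization. The error $E_\tau(w)$ depends on $c$ or $\epsilon$ together with the truncation indices $k_1,k_2$ entering the convergent sums~\eqref{eq:R31} and~\eqref{eq:R32}, and the convergence constraints $c\geq 1$ and $\epsilon>0$ prevent the head penalties from being shrunk arbitrarily. Extracting the particular value $C_0 = 7.28$ thus requires simultaneously balancing these trade-offs across all cases. A secondary issue is establishing the explicit additive constant in $\sum_{p\leq x}(\log p)/p \leq \log x + O(1)$ used in the Lipschitz step, which is not stated elsewhere in the paper but is a standard consequence of the Rosser--Schoenfeld estimates already invoked.
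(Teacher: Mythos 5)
Your overall strategy matches the paper's: split the sum at a $\tau$-dependent threshold $w$, apply Lemma \ref{boundedvariationest} to the tail (which produces $(1-K)(\log(1/\alpha) - \log\log w) + E_\tau(w)$), estimate the head directly, and treat the cases $|\tau| < 1$ and $|\tau| \geq 1$ (plus the sub-case $w > \exp(1/\alpha)$) separately. Your handling of $|\tau|\geq 1$, of the $\tau=0$ boundary, and of the degenerate sub-case are all in line with what the paper does.

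The genuine gap is in the head estimate for small $\tau$. You correctly identify that the trivial bound $f \leq S(f) = 1+K$ on $p \leq w = \exp(c/\tau)$ would blow up as $\tau \to 0$, and that one must instead exploit proximity of $\tau\log p$ to $0$. But you reach for the Lipschitz bound $f(s) \leq (1-K) + |s|$, paired with an estimate of the shape $\sum_{p\leq x}\log p / p \leq \log x + O(1)$. The paper instead uses the second-order Taylor bound $\lvert f(s) - (1-K)\rvert \leq \tfrac{1}{2}s^2$ (valid for all $s$, since the left side never exceeds $1-K$ and for $|s|\leq 1$ one is in the regime $\cos s > K$), paired with Proposition \ref{lsquaresum} for $\sum_{p\leq x}\log^2 p/p$. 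This is not a cosmetic difference: the quadratic bound is much sharper precisely where the $1/p$ weight concentrates (small $p$, hence small $\tau\log p$), so the resulting head penalty after summing is $\tfrac{1}{4}(1+10^{-8})c^2 \approx 1.78$ at the paper's choice $c=2.67$, whereas the linear bound yields roughly $|\tau|\log w = c$, i.e.\ a penalty near $2.67$ in the worst case as $\tau \downarrow c\alpha$. Re-optimizing $c$ for the linear objective $c + |E_\tau(w)|$ moves $c$ upward (to roughly $\sqrt{9.4}\approx 3$) but still leaves the head-plus-error contribution about $0.8$--$1$ larger than in the quadratic version. Since the case $c\alpha < \tau \leq 1$ is exactly the one that sets $C_0$ in the paper, your approach would produce a valid inequality of the claimed shape but with $C_0$ noticeably above $7.28$; as stated, it does not establish the lemma. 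You flagged the numerical uncertainty yourself, but the remedy is not just computation — it is the switch from the Lipschitz estimate to the quadratic Taylor estimate (and the corresponding $\sum\log^2 p/p$ Mertens-type bound, which the paper supplies as Proposition \ref{lsquaresum} for exactly this purpose). The separate explicit bound on $\sum_{p\leq x}\log p/p$ your version requires is, as you note, not in the paper and would need to be added.
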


\begin{proof}

We may assume $\tau > 0$. Start by considering $\tau \leq \alpha$. We observe that the Taylor expansion of $\cos{x}$ yields
\begin{equation}
  \lvert f(\tau \log{p}) - (1 - K) \rvert \leq \frac{1}{2}(\tau \log{p})^2.
\end{equation}
Hence, 
\begin{equation}\label{easytau}
    \sum_{p \leq w} \frac{f(\tau \log{p})}{p} \leq (1-K)\sum_{p \leq w} \frac{1}{p} + \frac{\tau^2}{2} \sum_{p \leq w}\frac{\log^2{p}}{p}.
\end{equation}
Applying Propositions \ref{mertensexplicit} and \ref{lsquaresum} to \eqref{easytau}, we obtain
\begin{equation}\label{fbaseestimate}
\sum_{p \leq \exp(\sfrac{1}{\alpha})} \frac{f(\tau \log{p})}{p} \leq (1-K)\left(\log{\frac{1}{\alpha}} + M + 
M'(\exp(\sfrac{1}{\alpha}))\right) + \frac{(1+10^{-8})}{4}\left(\frac{\tau}{\alpha}\right)^2 .
\end{equation}

Let $c > 1$ be a constant that will be chosen later. When $\tfrac{\tau}{c} \leq \alpha \leq 1$, we~have
\begin{equation}\label{eqn:firstcase}
    \sum_{p \leq \exp(\sfrac{1}{\alpha})} \frac{f(\tau \log{p})}{p} \leq (1-K)\log{\frac{1}{\alpha}} + (1-K)(M+1) + \frac{(1+10^{-8})c^2}{4}.
\end{equation}

Now, we consider $\alpha < \tfrac{\tau}{c} \leq 1$. If $w = \exp(\frac{c}{\tau})$, then \eqref{easytau} yields
\begin{equation}\label{1tow}
\begin{aligned}
\sum_{p \leq w} \frac{f(\tau \log{p})}{p} &\leq (1-K)\left(\log{\log{w}} + M + M'(w)\right) + \frac{(1+10^{-8})}{4} \\ &\leq (1-K)\log{\log{w}} + (1-K)(M+1) + \frac{(1+10^{-8})c^2}{4}.
\end{aligned}
\end{equation}
Noting that $\overline{f} = 1-K$, $S(f) = 1+K$, and $V(f) = 4$, we can now take $z = \exp(\alpha^{-1})$ in Lemma \ref{boundedvariationest}. This yields
\begin{equation}\label{wtoz}
    \sum_{w < p \leq z} \frac{f(\tau \log{p})}{p} \leq (1-K)\left(\log{\frac{1}{\alpha}} - \log{\log{w}}\right) + \lvert E_\tau(w) \rvert,
\end{equation}
where $E_\tau(w)$ is taken from \eqref{aeq:err1}.
Combining \eqref{1tow} and \eqref{wtoz} gives
\begin{equation}\label{eqn:secondcaseunrefined}
    \sum_{p \leq \exp(\sfrac{1}{\alpha})} \frac{f(\tau \log{p})}{p} \leq (1-K)\log{\frac{1}{\alpha}} + (1-K)(M+1)+\frac{(1+10^{-8})c^2}{4} + \lvert E_\tau(w) \rvert.
\end{equation}
For our choice of $c$, we focus on minimizing $\frac{(1+10^{-8})c^2}{4} + \lvert E_\tau(w) \rvert$. Taking $k_1 = 0$, we find that the best choice of $c$ is 2.67 and this leads to \eqref{eqn:secondcaseunrefined} becoming
\begin{equation}\label{eqn:secondcaserefined}
\sum_{p \leq \exp(\alpha^{-1})} \frac{f(\tau \log{p})}{p} \leq (1-K)\log{\frac{1}{\alpha}} + 7.28.
\end{equation}

If $\lvert \tau \rvert > 1$, we first consider the case that $(1+\epsilon)6.455\log^2(\lvert \tau \rvert + 3) \leq \tfrac{1}{\alpha}$. Taking $w$ as in \eqref{eq:wcomp} and $z = \exp(\alpha^{-1})$ in Lemma \ref{boundedvariationest}, we obtain
\begin{equation}\label{wtoz2}
    \sum_{w < p \leq z} \frac{f(\tau \log{p})}{p} \leq (1-K)\left(\log(\sfrac{1}{\alpha}) - \log{\log{w}}\right) + E_\tau(w),
\end{equation}
where $\lvert E_\tau(w) \rvert$ is bounded in \eqref{eq:err2} (and therefore depends on choices of $\epsilon$ and $k_2$).
It follows trivially from Proposition \ref{mertensexplicit} and $f(\tau \log{p}) \leq 1+K$ that 
\begin{equation}\label{trivialw}
\begin{aligned}
    \sum_{p \leq w} \frac{f(\tau \log{p})}{p} &\leq (1+K)\left(\log{\log{w}}+M + \frac{1}{ \left( (1+\epsilon)6.455\right)^2\log^4(\lvert \tau \rvert + 3)}\right).
\end{aligned}
\end{equation}
Taking \eqref{wtoz2} and \eqref{trivialw} together, with our choice for $w$, yields
\begin{equation}\label{unoptimized}
\begin{aligned}
    \sum_{p \leq \exp(\alpha^{-1})} \frac{f(\tau \log{p})}{p} &\leq (1-K)\log{\frac{1}{\alpha}} +4K\log\log(\lvert \tau \rvert + 3) + 2K\log((1+\epsilon)6.455)\\ &+ (1+K)\left(M + \frac{1}{ \left( (1+\epsilon)6.455\right)^2\log^4(\lvert \tau \rvert + 3)}\right) + E_\tau(w).
    \end{aligned}
\end{equation}
We need to optimize the last three terms of \eqref{unoptimized} with respect to $\epsilon$ and $k_2$. For fixed $\epsilon$ and $\tau$, it appears that $O_{k_2,\epsilon}(1)$ as defined in \eqref{eq:R32} is decreasing in $k_2$, but the savings are slight for large enough $k_2$. Therefore, in the interest of simpler computations, we choose $k_2 = 3\cdot10^5$. Some rough optimization over the terms involving $\epsilon$ in \eqref{unoptimized} shows that $\epsilon = 3.61$ gives a relatively small maximum over these terms as a function of $\tau$. Making this choice of $\epsilon$ and bounding the terms by their maximum in $\tau$, we determine that
\begin{equation}\label{optimized}
     \sum_{p \leq \exp(\alpha^{-1})} \frac{f(\tau \log{p})}{p} \leq (1-K)\log{\frac{1}{\alpha}} +4K\log\log(\lvert \tau \rvert + 3) + 3.25.
\end{equation}

The final case to consider is $\lvert \tau \rvert > 1$ and $ (1+\epsilon)6.455\log^2(\lvert \tau \rvert + 3) > \tfrac{1}{\alpha}$, but in this case the sum in question is bounded by the sum estimated in \eqref{trivialw}. Given our choice of $\epsilon$, this implies 
\begin{equation}\label{eqn:thirdcase}
    \sum_{p \leq \exp(\alpha^{-1})} \frac{f(\tau \log{p})}{p} \leq (2+2K)\log\log(\lvert \tau \rvert + 3) + 4.87.
\end{equation}
Taking \eqref{eqn:secondcaserefined} as the worst case between \eqref{eqn:firstcase}, \eqref{eqn:secondcaserefined}, \eqref{optimized} and \eqref{eqn:thirdcase}, completes the proof.

\end{proof}
Here is interesting to note that, as it will be clear from the following results, the constant $C_0$ is the main contributor to the size of the constant in Theorem \ref{thm:forresttheorem}, and thus of $\delta (c)$. Thus reducing $C_0$ would be a good starting point to improve Theorem \ref{Theo:exp}.
\begin{lemma}\label{lambdalemma}
For $\alpha \in \left[0,1\right]$, define $\lambda := \lambda(\alpha)$ to be the real number satisfying 
\[\sum_{p \leq \exp(\alpha^{-1})} \frac{1-g(p)}{p}  = \lambda \sum_{p\leq \exp(\alpha^{-1})} \frac{1}{p}.\]
Then,
\begin{equation*}
  \begin{aligned}
    \Re \sum_{p \leq \exp(\alpha^{-1})} \frac{g(p)}{p^{1+i\tau}} &\leq (1-K\lambda)\log(\frac{1}{\alpha}) + (2+2K)(\log{\log{\lvert \tau \rvert + 3 }}) \\ &+ C_0 + (K-K\lambda)(M + M'(\exp(\sfrac{1}{\alpha}))),
    \end{aligned}
\end{equation*}
for any $\tau \in \R$.
\end{lemma}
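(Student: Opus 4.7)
The plan is to reduce the lemma to an application of Lemma~\ref{lemma:explicitbigconst} via the pointwise inequality that introduces $K$ in the first place. Since $g$ is real with $|g(p)|\le 1$, the real part of each summand is $g(p)\cos(\tau\log p)/p$, and I would start from the elementary bound
\[
g(p)\cos(\tau\log p) \;=\; g(p)\bigl(\cos(\tau\log p)-K\bigr) + K\,g(p) \;\le\; \bigl|\cos(\tau\log p)-K\bigr| + K\,g(p),
\]
where the last step uses $|g(p)|\le 1$. This is precisely the device that makes Hal\'asz-type arguments go through with the extremal constant $K$, and it cleanly separates the contribution measured by the ``hard'' function $f(t)=|\cos t - K|$ from a linear contribution in $g(p)$.

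Summing over $p\le \exp(\alpha^{-1})$ and dividing by $p$, I would then split
\[
\Re \sum_{p\le \exp(1/\alpha)} \frac{g(p)}{p^{1+i\tau}}
\;\le\; \sum_{p\le \exp(1/\alpha)} \frac{f(\tau\log p)}{p}
\;+\; K\sum_{p\le \exp(1/\alpha)} \frac{g(p)}{p}.
\]
The first sum is bounded directly by Lemma~\ref{lemma:explicitbigconst}, giving $(1-K)\log(1/\alpha) + (2+2K)\log\log(|\tau|+3) + C_0$. For the second sum, I would use the definition of $\lambda$ and Proposition~\ref{mertensexplicit}:
\[
\sum_{p\le \exp(1/\alpha)} \frac{g(p)}{p}
\;=\; (1-\lambda)\!\!\sum_{p\le \exp(1/\alpha)}\!\! \frac{1}{p}
\;=\; (1-\lambda)\bigl(\log(1/\alpha) + M + M'(\exp(1/\alpha))\bigr),
\]
since $\log\log(\exp(1/\alpha)) = \log(1/\alpha)$.

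Adding the two estimates, the $\log(1/\alpha)$ coefficients combine as $(1-K) + K(1-\lambda) = 1-K\lambda$, which yields exactly the stated inequality. There is no substantive obstacle: once the pointwise bound separating $|\cos-K|$ from $K g(p)$ is written down, the rest is just plugging in Lemma~\ref{lemma:explicitbigconst} and Mertens' theorem. The only thing to watch is that $g$ is assumed real (inherited from the context of Theorem~\ref{thm:forresttheorem}); for complex-valued $g$ the same argument would only apply to $\Re g(p)$ and the relation defining $\lambda$ would have to be recast accordingly, but in our setting this is not needed.
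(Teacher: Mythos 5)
Your proof is correct and follows essentially the same route as the paper: the same pointwise inequality $g(p)\cos(\tau\log p)\le|\cos(\tau\log p)-K|+Kg(p)$, the same split into the $f(\tau\log p)$ sum handled by Lemma~\ref{lemma:explicitbigconst} and the $g(p)/p$ sum handled via the definition of $\lambda$ and Mertens, and the same recombination of the $\log(1/\alpha)$ coefficients. No substantive differences.
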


\begin{proof}
Consider the identity
\[\Re\left(\frac{g(p)}{p^{i\tau}}\right) = g(p)(\cos(\tau \log{p}) - K) + Kg(p) \leq \lvert \cos(\tau \log{p}) - K \rvert = f(\tau \log{p}) + Kg(p).\]
The definition of $\lambda$ implies that
\begin{equation}\label{eqn:lambdadef}
\sum_{p \leq \exp(\alpha^{-1})} \frac{g(p)}{p} = (1- \lambda)\left(\log{\frac{1}{\alpha}} + M + M'(\exp(\sfrac{1}{\alpha}))\right).
\end{equation}
Therefore, 
\begin{equation*}
    \Re\left(\sum_{p \leq \exp(\alpha^{-1})} \frac{g(p)}{p^{1+i\tau}}\right) \leq \sum_{p \leq \exp(\alpha^{-1})} \frac{f(\tau \log{p})}{p}  + K \sum_{p \leq \exp(\sfrac{1}{\alpha})} \frac{g(p)}{p}.
\end{equation*}
The result follows by applying Lemma \ref{lemma:explicitbigconst} and \eqref{eqn:lambdadef} to the terms above.
\end{proof}

Let $F(s)$ be the Dirichlet series corresponding to $g(n)$. We have the following estimate.
\begin{lemma}\label{Fexplicit}
For $\Re(s) > 1$, we have
\begin{equation}
    \left| F(s) \right| \leq \exp(\nu_2)\cdot\exp\left\{\Re\left(\sum_p \frac{g(p)}{p^{s}}\right)\right\},
\end{equation}
where $\nu_2 = \gamma - M \leq 0.316$.
\end{lemma}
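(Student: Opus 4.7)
The plan is to exploit the Euler product for $F(s)$ together with Mertens' constant, so the proof should be short. Since $g$ is completely multiplicative with $|g(n)| \leq 1$, for $\Re(s) > 1$ we have absolute convergence and
\[
F(s) = \prod_p \left(1 - \frac{g(p)}{p^s}\right)^{-1},
\]
so taking a logarithm and expanding yields
\[
\log F(s) = \sum_p \sum_{k \geq 1} \frac{g(p)^k}{k p^{ks}}.
\]
Taking real parts and separating the $k=1$ contribution gives
\[
\log|F(s)| = \Re \log F(s) = \Re \sum_p \frac{g(p)}{p^s} + \Re \sum_p \sum_{k \geq 2} \frac{g(p)^k}{k p^{ks}}.
\]

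Next I would bound the tail. Using $|g(p)^k| \leq 1$ and $\Re(s) > 1$,
\[
\left|\Re \sum_p \sum_{k \geq 2} \frac{g(p)^k}{k p^{ks}}\right| \leq \sum_p \sum_{k \geq 2} \frac{1}{k p^k}.
\]
The double sum on the right can be evaluated in closed form using the Taylor expansion of $-\log(1-1/p)$: indeed,
\[
\sum_{k \geq 2} \frac{1}{k p^k} = -\log\!\left(1 - \frac{1}{p}\right) - \frac{1}{p},
\]
so summing over primes and invoking the standard definition of the Meissel--Mertens constant, $M = \gamma + \sum_p \bigl(\log(1-1/p)+1/p\bigr)$, we obtain
\[
\sum_p \sum_{k \geq 2} \frac{1}{k p^k} = \gamma - M =: \nu_2.
\]
Using the standard decimal approximations $\gamma = 0.57721\ldots$ and $M = 0.26149\ldots$, we confirm $\nu_2 \leq 0.316$.

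Combining these bounds yields
\[
\log|F(s)| \leq \Re \sum_p \frac{g(p)}{p^s} + \nu_2,
\]
and exponentiating gives exactly the statement of the lemma. There is no substantive obstacle here: the only content is recognising the tail sum as the Meissel--Mertens constant and verifying the numerical bound on $\gamma - M$, both of which are standard.
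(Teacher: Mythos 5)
Your proposal is correct and follows essentially the same route as the paper: expand the Euler product logarithm, peel off the $k=1$ term, bound the $k\geq 2$ tail by $\sum_p\sum_{k\geq 2}\frac{1}{kp^k}$ using $|g(p)|\leq 1$ and $\Re(s)>1$, and recognise that sum as $\gamma - M$ via the Meissel--Mertens identity. The only cosmetic difference is that you work with $\log|F(s)| = \Re\log F(s)$ directly, whereas the paper factors $|F(s)|$ into a product of two exponentials; the content is identical.
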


\begin{proof}
Since $g(n)$ is completely multiplicative, we have that
\[F(s) = \prod_p \left( 1  - \frac{g(p)}{p^s} \right)^{-1}.\]
Therefore,
\[ \left| F(s) \right|  =  \left| \exp\left(- \sum_p \log\left(1- \frac{g(p)}{p^s}\right) \right) \right|.\]
Applying the Taylor expansion of $\log(1-x)$ to the inside of the above sum, we obtain
\begin{equation}\label{Fpieces}\left| F(s) \right|  =  \left| \exp\left(\sum_p \sum_{k=1}^\infty \frac{1}{k}\left(\frac{g(p)}{p^s}\right)^k \right)\right|
= \left| \exp\left(\sum_p \frac{g(p)}{p^s}\right)\right| \cdot \left|\exp\left(\sum_{k=2}^\infty \frac{1}{k} \sum_p \frac{g(p)^k}{p^{ks}}\right) \right|.\end{equation}
The sum over primes in the right-most term can be bounded above by the "prime" zeta function
\[P(s) := \sum_{p} \frac{1}{p^s},\]
which converges for $\Re(s) > 1$.
Therefore, we have 
\begin{equation}\label{Fconstant}\left| \sum_{k=2}^\infty \frac{1}{k} \sum_p \frac{g(p)^k}{p^{ks}} \right| \leq \left| \sum_{k=2}^\infty \frac{P(ks)}{k} \right| \leq \sum_{k=2}\frac{P(k)}{k} = \gamma - M.\end{equation}
The equality in \eqref{Fconstant} follows from the definition of $B$, since
\[\gamma - M = \sum_p -\log\left(1-\frac{1}{p}\right)-\frac{1}{p} = \sum_{p} \sum_{k=2}^\infty \frac{1}{kp^k} = \sum_{k=2}^\infty \frac{P(k)}{k}.  \]
Inserting \eqref{Fconstant} into \eqref{Fpieces} yields the desired result.
\end{proof}
The following result will be used in bounding the sum over the primes in Lemma \ref{Fexplicit}.
\begin{lemma}
\label{lemma:v_1}
Uniformly for $0 < \alpha \leq 1$, we have
\begin{equation*}
    \sum_{\exp(\alpha^{-1})}^{\infty}\frac{1}{p^{1+\alpha}} \leq 0.9235=:v_1.
\end{equation*}
\end{lemma}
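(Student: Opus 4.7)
The approach is partial summation combined with an explicit Chebyshev-type bound on the prime counting function. By Abel summation, for $s = 1+\alpha > 1$ and $T = \exp(1/\alpha)$,
\begin{equation*}
\sum_{p > T} \frac{1}{p^s} = s\int_T^\infty \frac{\pi(x)}{x^{s+1}}\, dx - \frac{\pi(T)}{T^s} \leq (1+\alpha) \int_T^\infty \frac{\pi(x)}{x^{2+\alpha}}\, dx,
\end{equation*}
since the boundary term is non-positive and can be dropped in an upper bound. I then invoke a uniform estimate of the form $\pi(x) \leq C\, x/\log x$, valid on $[T,\infty)$. For our range $\alpha \in (0,1]$ one has $T \geq e$, so one may combine the upper half of \eqref{pi} (which gives a small $C$ on $[59,\infty)$) with a direct numerical check that $\pi(x)\log x/x$ is bounded on the finite range $[e, 59)$, obtaining a uniform $C \leq 1.3$.

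The calculation then hinges on the substitution $v = \alpha \log x$: because $\alpha \log T = 1$ identically, the lower limit of integration becomes exactly $v = 1$, and
\begin{equation*}
\int_T^\infty \frac{dx}{x^{1+\alpha} \log x} = \int_1^\infty \frac{e^{-v}}{v}\, dv = E_1(1) \approx 0.2194,
\end{equation*}
a universal constant independent of $\alpha$. Combining this with the previous display yields
\begin{equation*}
\sum_{p > T} \frac{1}{p^{1+\alpha}} \leq C(1+\alpha) E_1(1) \leq 2C \cdot E_1(1),
\end{equation*}
which is safely under $v_1 = 0.9235$ for any $C \leq 1.3$.

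The only delicate point will be pinning down the Chebyshev constant uniformly on $[e, \infty)$: the finite interval $[e, 17)$ requires a direct numerical verification (the supremum of $\pi(x)\log x/x$ there is around $1.19$, attained at $x = 13$), after which Rosser's bound $\pi(x) \leq 1.25506\, x/\log x$ from \cite{rosser1962} covers $[17, \infty)$. The identity $\alpha \log T = 1$ is what makes the bound uniform in $\alpha$; without it, $E_1$ would appear at an $\alpha$-dependent argument and the tail behaviour as $\alpha \to 0$ would require separate treatment. Given the generous slack in the stated constant $0.9235$, no optimisation of constants is required, and the proof reduces to a short computation once the Chebyshev estimate and the substitution are in place.
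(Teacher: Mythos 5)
Your proof is correct and takes essentially the same route as the paper's: Abel summation on the tail, a Chebyshev-type bound $\pi(x)\le C\,x/\log x$, and the identity $\alpha\log T=1$ to make the resulting integral uniform in $\alpha$. The only differences are cosmetic: the paper bounds $\int_T^\infty x^{-1-\alpha}(\log x)^{-1}\,dx$ more crudely by $(1/\log T)\int_T^\infty x^{-1-\alpha}\,dx=1/e$ (which, with $C=1.2551$ from Rosser--Schoenfeld and $\alpha=1$, is exactly what produces $0.9235$), whereas you evaluate it as $E_1(1)\approx 0.2194$; and your worry about a separate numerical check on $[e,17)$ is unnecessary, since (3.6) of \cite{rosser1962}, namely $\pi(x)<1.25506\,x/\log x$, already holds for all $x>1$.
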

\begin{proof}
By partial summation we have
\begin{equation*}
    \sum_{\exp(\alpha^{-1})}^{\infty} \frac{1}{p^{1+\alpha}} =-\pi(\exp(\alpha^{-1}))\exp(-(1+\alpha^{-1}))+(1+\alpha) \int_{\exp(\alpha^{-1})}^{\infty} \frac{\pi(x)}{x^{2+\alpha}} dx.
\end{equation*}
Using (3.6) from \cite{rosser1962}, we then obtain
\begin{equation*}
\sum_{\exp(\alpha^{-1})}^{\infty} \frac{1}{p^{1+\alpha}}\leq (1+\alpha) 1.2551\int_{\exp(\alpha^{-1})}^{\infty} \frac{1}{ x^{1+\alpha}\log x} dx \leq \frac{(1+\alpha)1.2551}{e},
\end{equation*}
the result now follows taking the maximum over $\alpha \in (0,1]$.
\end{proof}
Note that using a better explicit version of the PNT could improve the above result, as this improvement appears to be minor we decided, for the sake of simplicity, for the above version.
\subsection{Proof of Theorem \ref{thm:forresttheorem}}\label{sub:3}

Consider $F(1+\alpha + it)$, where $0 < \alpha \leq 1$ and $t \in \R$. By Lemma \ref{Fexplicit}, we have
\begin{equation}\label{Falpha}
  \left| F(1+\alpha+it) \right| \leq \exp(\nu_2)\cdot\exp\left\{\Re\left(\sum_p \frac{g(p)}{p^{1+\alpha+it}}\right)\right\}.
\end{equation}
We break the sum over primes in \eqref{Falpha} at $\exp(\alpha^{-1})$, yielding the bound
\begin{equation}\label{hallF}
\begin{aligned}
\left| \Re\left(\sum_p \frac{g(p))}{p^{1+\alpha+it}}\right) \right| &=& &\left| \Re\left(\sum_{p\leq \exp(\alpha^{-1})} \frac{g(p)}{p^{1+\alpha+it}}\right) + \Re\left(\sum_{p > \exp(\alpha^{-1})} \frac{g(p)}{p^{1+\alpha+it}}\right)\right| \\ &\leq& &\left| \sum_{p\leq \exp(\alpha^{-1})} \frac{g(p)}{p^{1+\alpha+it}} \right| + \left| \sum_{p > \exp(\alpha^{-1})} \frac{1}{p^{1+\alpha}}\right|.
\end{aligned}
\end{equation}
Simply ignoring $p^\alpha$ in the first sum on the right of \eqref{hallF} and applying Lemma \ref{lemma:v_1} to the second sum, we obtain
\begin{equation}\label{realsum}
    \left| \Re\left(\sum_p \frac{g(p)}{p^{1+\alpha+it}}\right) \right| \leq \left| \sum_{p\leq \exp(\alpha^{-1})} \frac{g(p)}{p^{1+it}} \right| + \nu_1.
\end{equation}
Now, we may apply Lemma \ref{lambdalemma} to the remaining sum in \eqref{realsum} and place this estimate in \eqref{Falpha} to establish
\begin{equation}\label{explicitF}
    \lvert F(1+\alpha+i\tau) \rvert \leq \exp(C_0 + \nu_1 + \nu_2 + (K-K\lambda)(M + 1)).
\end{equation}
Write $C:= C_0 + \nu_1 + \nu_2 + K(M + 1)$. Recalling Theorem \ref{thm:matteotheorem}, we see that \eqref{explicitF} implies
\begin{equation*}
    H^2(\alpha) \leq \exp(2C)\alpha^{2K\lambda - 2}\sum_{k \in \Z} \frac{\log^{2+2K}(\lvert k \rvert + 4)}{(k-\sfrac{1}{2})^2 + 1}.
\end{equation*}
The integer sum above is a computable constant. Calling its square root $\nu_3$, we have
\begin{equation}\label{eqn:Hest}
    H(\alpha) \leq \nu_3 \exp(C) \alpha^{K\lambda -  1},
\end{equation}
and note that $\nu_3 \leq 4.36$. 
Now, if $\Lambda := \Lambda(x)$ is defined by 
\begin{equation}\label{lambdadef}
    \sum_{p\leq x} \frac{1 - g(p)}{p}  = \Lambda\sum_{p \leq x} \frac{1}{p},
\end{equation}
then, for $\sfrac{1}{\log{x}} \leq \alpha \leq 1$, 
\begin{equation}
\begin{aligned}
    \sum_{p \leq \exp(\alpha^{-1})}  \frac{1 - g(p)}{p} &\geq \sum_{p \leq x}  \frac{1 - g(p)}{p} - \sum_{\exp(\alpha^{-1}) < p \leq x}\frac{2}{p} \\ &\geq (\Lambda - 2) \sum_{p \leq x} \frac{1}{p} + 2 \sum_{p \leq \exp(\alpha^{-1})} \frac{1}{p}.
\end{aligned}
\end{equation}
Recalling the definition of $\lambda$ in Lemma \ref{lambdalemma} and using Proposition \ref{mertensexplicit1} we easily obtain,
\[\alpha^\lambda \leq \alpha^2(\log{x})^{2-\Lambda}e^{3(0.867-0.261)}\leq \alpha^2(\log{x})^{2-\Lambda}e^{1.82},\]
which, when applied to \eqref{eqn:Hest} implies
\begin{equation}
    H(\alpha) \leq \nu_3 \exp(C)e^{1.82K} \alpha^{2K-1} (\log{x})^{(2-\Lambda)K}.
\end{equation}
Taking this estimate for $H(\alpha)$ in Theorem \ref{thm:matteotheorem}, we find that
\begin{equation}\label{eqn:bigGest}
\begin{aligned}
   & \lvert G(x) \rvert \leq \left(3.14  \nu_3 \exp(C)e^{1.82K}  + \littleo{1}\right) \frac{x(\log{x})^{(2-\Lambda)K}}{\log{x}} \int_{\sfrac{1}{\log{x}}}^1 \! \alpha^{2K-2} \, d\alpha + \bigO_{x_0}\left(\frac{x}{\sqrt{\log{x}}}\right) \\  
    &= \left(\frac{3.14  \nu_3 \exp(C) e^{1.82K} }{1-2K} + \littleo{1}\right) \frac{x(\log{x})^{(2-\Lambda)K}}{\log{x}}\left( \log{x}^{1-K\Lambda} - \log{x}^{2-\Lambda}\right) + \bigO_{x_0}\left(\frac{x}{\sqrt{\log{x}}}\right) \\
    &\leq (\textbf{a} + \littleo{1}) \frac{x}{\log{x}}(\log{x})^{1-2K+(2-\Lambda)K} + \bigO_{x_0}\left(\frac{x}{\sqrt{\log{x}}}\right) \\
    &= (\textbf{a} +\littleo{1}) x (\log{x})^{-\Lambda K} + \bigO_{x_0}\left(\frac{x}{\sqrt{\log{x}}}\right). 
\end{aligned}
\end{equation}
Here, $\textbf{a} \approx 5.5\cdot10^5$ is the collected constant term up to this point. It follows from \eqref{lambdadef} that 
\[\Lambda = \frac{\sum_{p \leq x} \frac{1 - g(p)}{p} - \Lambda M- \Lambda M'(x)}{\log\log{x}},\]
and that for $x < 2$, we can take $\Lambda = 0$. For $x \geq 2$, we have $\Lambda \leq 2$. Furthermore, one may verify that $\lvert M'(x) \rvert < 0.6051$ for $2 \leq x < 4$ and $\lvert M'(x) \rvert \leq \tfrac{1}{\ln^2{4}} < 0.6051$ for $x > 4$. Therefore, we may write
\begin{equation}\label{eqn:constantstogether}
\begin{aligned}
    \textbf{a} (\log{x})^{-\Lambda K} &= \textbf{a}\exp(K\Lambda M) \exp\left(K\Lambda M'(x)\right)  \exp\left(-K \sum_{p \leq x} \frac{1 - g(p)}{p}\right) \\ 
        &\leq \textbf{a}\exp\left(2KM + {1.21K}\right) \exp\left(-K \sum_{p \leq x} \frac{1 - g(p)}{p}\right)
        \\ &= 9.75 \cdot 10^{5} \exp\left(-K \sum_{p \leq x} \frac{1 - g(p)}{p}\right).
\end{aligned}
\end{equation}

Taking \eqref{eqn:constantstogether} in \eqref{eqn:bigGest} completes the proof. 

\section*{Acknowledgments}
We would like to thank our supervisor, Tim Trudgian, for suggesting this topic to us and for his insightful comments. We are also grateful to Leo Goldmakher for his comments on an earlier version of this article and for bringing \cite{Mangerel} to our attention.
\bibliography{bibliographic.bib} 

@article{Bennett,
    AUTHOR = {Bennett, M. A.  and Martin, G. and O'Bryant, K. and Rechnitzer, A.},
     TITLE = {Explicit bounds for primes in arithmetic progressions},
   JOURNAL = {Illinois J. Math},
  FJOURNAL = {},
    VOLUME = {6},
      YEAR = {2018},
     PAGES = {427-532},
      ISSN = {},
   MRCLASS = {},
  MRNUMBER = {},
MRREVIEWER = {},
       DOI = {},
}

@article{Bordignon,
    AUTHOR = {M. Bordignon and B. Kerr},
     TITLE = {An explicit {P}\'{o}lya-{V}inogradov inequality via Partial Gaussian sums},
   JOURNAL = {To appear in Trans. Amer. Math. Soc.},
    FJOURNAL = {},
    VOLUME = {},
      YEAR = {2020},
     PAGES = {},
      ISSN = {},
   MRCLASS = {},
  MRNUMBER = {},
MRREVIEWER = {},
       DOI = {https://doi.org/10.1090/tran/8138},
}

@article {Breusch,
    AUTHOR = {Breusch, R.},
     TITLE = {Zur {V}erallgemeinerung des {B}ertrandschen {P}ostulates, da\ss 
              zwischen {$x$} und 2 {$x$} stets {P}rimzahlen liegen},
   JOURNAL = {Math. Z.},
  FJOURNAL = {Mathematische Zeitschrift},
    VOLUME = {34},
      YEAR = {1932},
    NUMBER = {1},
     PAGES = {505--526},
      ISSN = {0025-5874},
   MRCLASS = {DML},
  MRNUMBER = {1545270},
       DOI = {10.1007/BF01180606},
       URL = {https://doi.org/10.1007/BF01180606},
}

@article{buchstab,
    AUTHOR = {A. A. Buchstab},
     TITLE = {On those numbers in an arithmetic progression all primes factors of which are small in magnitude },
   JOURNAL = {Doklady Akad. Nauk. SSSR},
  FJOURNAL = {},
    VOLUME = {67},
      YEAR = {1949},
     PAGES = {5-8},
      ISSN = {},
   MRCLASS = {},
  MRNUMBER = {},
MRREVIEWER = {},
       DOI = {},
}

@article {deBruijn,
    AUTHOR = {de Bruijn, N. G.},
     TITLE = {The asymptotic behaviour of a function occurring in the theory
              of primes},
   JOURNAL = {J. Indian Math. Soc. (N.S.)},
  FJOURNAL = {The Journal of the Indian Mathematical Society. New Series},
    VOLUME = {15},
      YEAR = {1951},
     PAGES = {25--32},
      ISSN = {0019-5839},
   MRCLASS = {10.0X},
  MRNUMBER = {43838},
MRREVIEWER = {L. Mirsky},
}

@phdthesis{dusart1998, 
title={Autour de la fonction qui compte le nombre de nombres premiers}, 
school   = {Université de Limoges},
author={Dusart, P.}, 
year={1998},
}

@article {Forrest,
    AUTHOR = {Francis, F. J.},
     TITLE = {An Investigation Into Several Explicit Versions of {B}urgess' Bound},
   JOURNAL = {arXiv:1910.13669 },
  FJOURNAL = {},
    VOLUME = {},
      YEAR = {2019},
    NUMBER = {},
     PAGES = {},
      ISSN = {},
   MRCLASS = {},
  MRNUMBER = {},
MRREVIEWER = {},
       DOI = {},
       URL = {},
}

@article {Fromm,
    AUTHOR = {Fromm, E. and Goldmakher, L.},
     TITLE = {Improving the {B}urgess bound via {P}\'{o}lya-{V}inogradov},
   JOURNAL = {Proc. Amer. Math. Soc.},
  FJOURNAL = {Proceedings of the American Mathematical Society},
    VOLUME = {147},
      YEAR = {2019},
    NUMBER = {2},
     PAGES = {461--466},
      ISSN = {0002-9939},
   MRCLASS = {11L40 (11N37 11N56)},
  MRNUMBER = {3894884},
MRREVIEWER = {Timothy S. Trudgian},
       DOI = {10.1090/proc/14171},
       URL = {https://doi.org/10.1090/proc/14171},
}

@article{frolenkov2013,
    AUTHOR = {Frolenkov, D. A. and Soundararajan, K.},
     TITLE = {A generalization of the {P}\'{o}lya-{V}inogradov inequality},
   JOURNAL = {Ramanujan J.},
  FJOURNAL = {Ramanujan Journal. An International Journal Devoted to the
              Areas of Mathematics Influenced by Ramanujan},
    VOLUME = {31},
      YEAR = {2013},
    NUMBER = {3},
     PAGES = {271--279},
      ISSN = {1382-4090},
   MRCLASS = {11L40 (11A25 11L03 11L07)},
  MRNUMBER = {3081668},
MRREVIEWER = {Shuguang Li},
       DOI = {10.1007/s11139-012-9462-y},
 }

@incollection {Granville2007.1,
    AUTHOR = {Granville, A. and Soundararajan, K.},
     TITLE = {Negative values of truncations to {$L(1,\chi)$}},
 BOOKTITLE = {Analytic number theory},
    SERIES = {Clay Math. Proc.},
    VOLUME = {7},
     PAGES = {141--148},
 PUBLISHER = {Amer. Math. Soc., Providence, RI},
      YEAR = {2007},
   MRCLASS = {11M20},
  MRNUMBER = {2362198},
MRREVIEWER = {Valentin Blomer},
}

@book{hall1988,
  author={Hall, R.R. and Tenenbaum, G.},
  title={Divisors},
  isbn={9780521340564},
  lccn={87024239},
  series={Cambridge Tracts in Mathematics, Vol. 90},
  year={1988},
  publisher={Cambridge University Press},
  address   = {Cambridge,  New York,  New  Rochelle, Melbourne,  and  Sydney}
  
}

@article{hall1991,
    author = {Hall, R. R. and Tenenbaum, G.},
    title= {Effective mean value estimates for complex multiplicative
              functions},
   journal = {Math. Proc. Cambridge Philos. Soc.},
    VOLUME = {110},
      YEAR = {1991},
    NUMBER = {2},
     PAGES = {337--351},
      ISSN = {0305-0041},
   MRCLASS = {11N37},
  MRNUMBER = {1113432},
       DOI = {10.1017/S0305004100070419},
}

@article {Hildebrand1988,
    AUTHOR = {Hildebrand, A.},
     TITLE = {On the constant in the {P}\'{o}lya-{V}inogradov inequality},
   JOURNAL = {Canad. Math. Bull.},
  FJOURNAL = {Canadian Mathematical Bulletin. Bulletin Canadien de
              Math\'{e}matiques},
    VOLUME = {31},
      YEAR = {1988},
    NUMBER = {3},
     PAGES = {347--352},
      ISSN = {0008-4395},
   MRCLASS = {11L40},
  MRNUMBER = {956367},
MRREVIEWER = {Matti Jutila},
       DOI = {10.4153/CMB-1988-050-1},
       URL = {https://doi.org/10.4153/CMB-1988-050-1},
}

@article {Hildebrand,
    AUTHOR = {Hildebrand, A.},
     TITLE = {Quantitative mean value theorems for nonnegative
              multiplicative functions. {II}},
   JOURNAL = {Acta Arith.},
  FJOURNAL = {Polska Akademia Nauk. Instytut Matematyczny. Acta Arithmetica},
    VOLUME = {48},
      YEAR = {1987},
    NUMBER = {3},
     PAGES = {209--260},
      ISSN = {0065-1036},
   MRCLASS = {11N60},
  MRNUMBER = {921088},
MRREVIEWER = {Evelyn Scriba},
       DOI = {10.4064/aa-48-3-209-260},
       URL = {https://doi.org/10.4064/aa-48-3-209-260},
}

@article {Mangerel,
    AUTHOR = {A. P. Mangerel},
     TITLE = {Short Character Sums and the P{o}'lya-Vinogradov Inequality},
   JOURNAL = {arXiv:1905.09238 },
  FJOURNAL = {},
    VOLUME = {},
      YEAR = {2019},
    NUMBER = {},
     PAGES = {},
      ISSN = {},
   MRCLASS = {},
  MRNUMBER = {},
MRREVIEWER = {},
       DOI = {},
       URL = {},
}

@article{montgomery1978,
    AUTHOR = {Montgomery, H. L. },
     TITLE = {A note on the mean value of multiplicative functions},
   JOURNAL = {Inst. Mittag Leffler},
  FJOURNAL = {},
    VOLUME = {report N. 17},
      YEAR = {1978},
    NUMBER = {},
     PAGES = {},
      ISSN = {},
   MRCLASS = {},
  MRNUMBER = {},
MRREVIEWER = {},
       DOI = {},
}

@article {Mossinghoff,
    AUTHOR = {Mossinghoff, M. J. and Trudgian, T. S.},
     TITLE = {Nonnegative trigonometric polynomials and a zero-free region
              for the {R}iemann zeta-function},
   JOURNAL = {J. Number Theory},
  FJOURNAL = {Journal of Number Theory},
    VOLUME = {157},
      YEAR = {2015},
     PAGES = {329--349},
      ISSN = {0022-314X},
   MRCLASS = {11M26 (11N05 11Y35 42A05)},
  MRNUMBER = {3373245},
MRREVIEWER = {Jan-Christoph Schlage-Puchta},
       DOI = {10.1016/j.jnt.2015.05.010},
       URL = {https://doi.org/10.1016/j.jnt.2015.05.010},
}

@article{Platt2019,
    AUTHOR = {Platt, D. and Trudgian, T.},
     TITLE = {The error term in the Prime Number Theorem},
   JOURNAL = {To appear in Math. Comp.},
  FJOURNAL = {},
    VOLUME = {},
      YEAR = {2019},
    NUMBER = {},
     PAGES = {},
      ISSN = {},
   MRCLASS = {},
  MRNUMBER = {},
MRREVIEWER = {},
       DOI = {},
}

@article{rosser1962,
    AUTHOR = {Rosser, J. B. and Schoenfeld, L.},
     TITLE = {Approximate formulas for some functions of prime numbers},
   JOURNAL = {Illinois J. Math.},
  FJOURNAL = {Illinois Journal of Mathematics},
    VOLUME = {6},
      YEAR = {1962},
     PAGES = {64--94},
      ISSN = {0019-2082},
   MRCLASS = {10.42},
  MRNUMBER = {0137689},
MRREVIEWER = {B. K. Ghosh},
}

@book{tenenbaum2015,
    AUTHOR = {Tenenbaum, G.},
     TITLE = {Introduction to {A}nalytic and {P}robabilistic {N}umber {T}heory},
    SERIES = {Graduate Studies in Mathematics},
    VOLUME = {163},
   EDITION = {Third},
      NOTE = {Translated from the 2008 French edition by Patrick D. F. Ion},
 PUBLISHER = {American Mathematical Society, Providence, RI},
      YEAR = {2015},
     PAGES = {xxiv+629},
      ISBN = {978-0-8218-9854-3},
   MRCLASS = {11-02 (11Kxx 11Mxx 11Nxx)},
  MRNUMBER = {3363366},
}

@article{trevino2015,
    AUTHOR = {Trevi\~{n}o, E.},
     TITLE = {The {B}urgess inequality and the least {$k$}th power
              non-residue},
   JOURNAL = {Int. J. Number Theory},
  FJOURNAL = {International Journal of Number Theory},
    VOLUME = {11},
      YEAR = {2015},
    NUMBER = {5},
     PAGES = {1653--1678},
      ISSN = {1793-0421},
   MRCLASS = {11L40 (11A15 11Y60)},
  MRNUMBER = {3376232},
MRREVIEWER = {John H. Loxton},
       DOI = {10.1142/S1793042115400163},
}

@article{Trudgian,
    AUTHOR = {Trudgian, T.},
     TITLE = {Updating the error term in the prime number theorem},
   JOURNAL = {Ramanujan J.},
  FJOURNAL = {},
    VOLUME = {39},
      YEAR = {2016},
    NUMBER = {},
     PAGES = {225--234},
      ISSN = {},
   MRCLASS = {},
  MRNUMBER = {},
MRREVIEWER = {},
       DOI = {10.1007/s11139-014-9656-6},
}

@book{weil19481,
  title={Sur {L}es {C}ourbes {A}lg\'{e}briques et {L}es {V}ari\'{e}t\'{e}s {Q}ui {S}'en
              {d}\'{e}duisent},
  author={Weil, A.},
  lccn={lc49002341},
  series={Actualit\'{e}s Sci. Ind.},
  url={https://books.google.com.au/books?id=-Ip0nQEACAAJ},
  year={1948},
  publisher={Hermann et Cie., Paris}
}
\bibliographystyle{agsm}
\end{document}